\newcommand{\bit}{\begin{itemize}}
\newcommand{\eit}{\end{itemize}}
\newcommand{\Dam}{\mathcal{D}}
\newcommand{\spe}{\psi}
\newcommand{\bzero}{\bold{0}}
\newcommand{\x}{\mathbf{x}}
\newcommand{\y}{\mathbf{y}}
\renewcommand{\u}{\mathbf{u}}
\newcommand{\A}{\mathbf{A}}
\newcommand{\B}{\mathbf{B}}
\renewcommand{\b}{\mathbf{b}}
\newcommand{\C}{\mathbf{C}}
\newcommand{\N}{\mathbf{N}}
\renewcommand{\H}{\mathbf{H}}
\newcommand{\V}{\mathbf{V}}
\newcommand{\W}{\mathbf{W}}
\newcommand{\X}{\mathbf{X}}
\renewcommand{\I}{\mathbf{I}}
\renewcommand{\L}{\mathbf{L}}
\renewcommand{\S}{\mathbf{S}}
\renewcommand{\P}{\mathbf{P}}
\newcommand{\Q}{\mathbf{Q}}
\begin{document}

\title*{Balanced Truncation Model Reduction \\ for Lifted Nonlinear Systems}
\author{Boris Kramer and Karen Willcox}
\institute{Boris Kramer \at Department of Mechanical and Aerospace Engineering, University of California San Diego, \email{bmkramer@ucsd.edu}
\and Karen Willcox \at Oden Institute for Computational Engineering Science, University of Texas at Austin, \email{kwillcox@oden.utexas.edu}}
%
%
\maketitle

\abstract{
We present a balanced truncation model reduction approach for a class of nonlinear systems with time-varying and uncertain inputs. 
First, our approach brings the nonlinear system into quadratic-bilinear~(QB) form via a process called lifting, which introduces transformations via auxiliary variables to achieve the specified model form. 
Second, we extend a recently developed QB balanced truncation method to be applicable to such lifted QB systems that share the common feature of having a system matrix with zero eigenvalues. 
We illustrate this framework and the multi-stage lifting transformation on a tubular reactor model. In the numerical results we show that our proposed approach can obtain reduced-order models that are more accurate than proper orthogonal decomposition reduced-order models in situations where the latter are sensitive to the choice of training data. 	
}

\section{Introduction}
We consider reduced-order modeling for large-scale nonlinear systems with time-dependent and uncertain inputs, as such models appear in many practical engineering applications. A reduced-order model (ROM) has to capture the rich dynamics resulting from time-dependent inputs~\cite{antoulas05}. Proper orthogonal decomposition (POD)~\cite{holmes_lumley_berkooz_1996} is the most common model reduction framework for nonlinear systems. As a trajectory-based method, POD relies on user-specified training data, leaving the method vulnerable to poor choices in training inputs.  

Interpolatory model reduction approaches that focus on the transfer function mapping from inputs to outputs---and do not require training data---are mature for linear systems~\cite{antoulas05,antoulas2010interpolatory}. Moreover, significant progress has been made for input-output model reduction for nonlinear systems with known governing equations~\cite{bennerBreiten2015twoSided,bennergoyal2016QBIRKA,cao2018krylovQB}. In the situation where the governing equations are unknown and only frequency-domain input-output data is available, Antoulas and Gosea~\cite{gosea2018LoewnerQB} proposed a data-driven reduced-order modeling framework for quadratic-bilinear~(QB) systems. 

Balanced truncation methods offer another promising avenue for model reduction. Initially proposed by Moore~\cite{moore81principal} for linear systems, many extensions are available, such as snapshot-based approximations (in the stable~\cite{willcox2002balanced} and unstable case~\cite{flinois2015BTunstableLTI}), as well as weighted and time-limited balanced truncation methods, see e.g., the survey~\cite{gugercin2004survey}. 
Extending balancing transformations to large-scale nonlinear systems is an open problem, since the balancing transformations become state-dependent (see~\cite{scherpen1993balancing,fujimoto2010balanced}) and are hence impractical when the model dimension is large. To overcome this computational bottleneck, approximate Gramians that are state-independent (as they are in the linear case) can be used.
Balanced truncation for nonlinear systems based on algebraic Gramians~\cite{verriest2006algebraicGramiansNLBT} requires the solution of Lyapunov-type equations, and is hence appealing from a computational perspective. A computationally efficient framework via truncated Gramians for QB systems has been proposed in~\cite{bennerGoyal2017BT_quadBilinear}.
Empirical Gramians for nonlinear systems~\cite{lall2002subspace} can also be used, yet their computation requires as many simulations of the full systems as there are inputs and outputs: each simulation uses an impulse disturbance per input channel while setting the other inputs to zero. 
A method for approximating the nonlinear balanced truncation reduction map  via reproducing kernel Hilbert spaces has been proposed in~\cite{bouvrie2017kernel}. 

In this work, we propose a balanced truncation model reduction method for a class of nonlinear systems with time-varying inputs. As a first step, we use variable transformations and \textit{lifting}, which allows us to bring the nonlinear system in QB form via the introduction of auxiliary variables. Lifting transformations that transform general nonlinear systems into polynomial models have been explored over several decades in different communities~\cite{mccormick1976computability, kerner1981universal,savageau1987recasting,gu2011qlmor,bennerBreiten2015twoSided,liu2015abstraction,KW18nonlinearMORliftingPOD,SKHW2020_learning_ROMs_combustor}.  
We show that the lifted QB models often have a special structure, namely that the linear system matrix becomes has zero eigenvalues, making the balancing algorithm for QB systems~\cite{bennerGoyal2017BT_quadBilinear} not directly applicable due to the typical requirement that the system matrix be stable. We thus propose a modified approach for balanced truncation of lifted QB systems, which uses the definition of the lifted variable to artificially introduce a stabilization to the system.
As an example, we consider a  nonlinear tubular reactor model for which we present a lifting transformation that brings the model into QB form, and subsequently perform balancing.

This paper is organized as follows. Section~\ref{sec:truncGramians} reviews truncated Gramians for QB systems. Section~\ref{sec:Lifting_structure} illustrates that lifted QB models often have a common structure and also introduces the tubular reactor model and its lifted QB model. Section~\ref{sec:balancingLIftedSys} proposes our balancing approach for lifted QB systems with special structure. Section~\ref{sec:numerics} presents numerical results for the tubular reactor model.

\section{Quadratic-bilinear systems and balancing} \label{sec:truncGramians}
A QB system of ordinary differential equations can be written as	
\begin{align}
	\dot{\x} & = \A \x + \H (\x \otimes \x) + \sum_{k=1}^m (\N_k \x) \u_k + \B \u, \label{eq:QB1}\\
	\y &= \C\x,\label{eq:QB2}
\end{align}
where $\x(t) \in \mathbb{R}^{n}$ is the state, $t\geq 0$ denotes time, the initial condition is $\x(0)=\x_0$,  $\u(t) \in \mathbb{R}^m$ is a time-dependent input and $\u_k$ its $k$th component, $\B\in \mathbb{R}^{n\times m}$ is the input matrix, $\A \in \mathbb{R}^{n\times n}$ is the system matrix, $\C\in \mathbb{R}^{p\times n}$ is the output matrix and the $\N_k \in \mathbb{R}^{n\times n}, k=1, \ldots, m$ represent the bilinear coupling between input and state. 
The matrix $\H$ is viewed as a mode-1 matricization of a tensor $\mathcal{H}\in \mathbb{R}^{n\times n \times n}$, so $\mathcal{H}^{(1)} = \H$. Moreover, $\mathcal{H}^{(2)}$ denotes the mode-2 matricization of the tensor $\mathcal{H}$. We assume without loss of generality (see~\cite[Sec 3.1]{bennerBreiten2015twoSided}) that the matrix $\H$ is symmetric in that $\H(\x_1 \otimes \x_2) = \H(\x_2 \otimes \x_1)$. 

For system~\eqref{eq:QB1}--\eqref{eq:QB2} the controllability and observability energy functions are defined as
\begin{align*}
	\mathcal{E}_c(\x_0) & := \min_{\u\in L_2(-\infty,0), \x(-\infty)=\bzero, \x(0)=\x_0} \ \frac{1}{2} \int_{-\infty}^{0} \Vert \u(t) \Vert^2 \text{d}t, \\
	\mathcal{E}_o(\x_0) & :=\frac{1}{2} \int_{0}^{\infty} \Vert \y(t) \Vert^2 \text{d}t, 
\end{align*}
where $\mathcal{E}_c$ quantifies the minimum amount of energy required to steer the system from $\x(-\infty)=0$ to $\x(0)=\x_0$, and $\mathcal{E}_o$ quantifies the output energy generated by the nonzero initial condition $\x_0$ and $\u(t)\equiv0$.

Balanced truncation achieves reduction in the dimension of the state space by eliminating those states that are hard to control (large $\mathcal{E}_c$) and hard to observe (small $\mathcal{E}_o$). Thus, an important component of balanced truncation model reduction is to find the coordinate transformation that ranks the controllability and observability of the states. The \textit{Gramian} matrices (formally introduced below) are central to finding this balanced coordinate transformation for system~\eqref{eq:QB1}--\eqref{eq:QB2}. In particular, if symmetric positive definite Gramians $\P = \L_\P {\L_\P}^\top$ and $\Q=\L_\Q {\L_\Q}^\top$  for the QB system~\eqref{eq:QB1}--\eqref{eq:QB2} are available, we can obtain transformation matrices $\V$ and $\W$, with $\V = \W^{-1}$ that diagonalize the Gramians, $\V^\top \P \V = \W^\top \Q \W =\mathcal{S}=\text{diag}{(\sigma_1, \ldots, \sigma_n)}$, where $\sigma_i$ are the singular values\footnote{In the linear case those would be the Hankel singular values.} of ${\L_\Q}^\top \L_\P$.   The energy functions in the balanced state $\tilde{\x} = \W \x$ can then be bounded as $ \mathcal{E}_c(\tilde{\x})\geq \frac{1}{2}\tilde{\x}^\top \mathcal{S}^{-1} \tilde{\x}$, and $ \mathcal{E}_o(\tilde{\x})\leq \frac{1}{2} \tilde{\x}^\top \mathcal{S} \tilde{\x}$ in a neighborhood of the origin \cite[Sec.4]{bennerGoyal2017BT_quadBilinear}.
From these inequalities, we see that states corresponding to small singular values of ${\L_\Q}^\top \L_\P$ are \textit{hard to control} and \textit{hard to observe}. The subspaces spanned by the singular vectors corresponding to small singular values  can thus be discarded in a ROM. 

We now turn to the computation of the Gramian matrices $\P$ and $\Q$. For the special case of QB systems, the next proposition states conditions under which approximate algebraic Gramians exist, and suggests a computational framework to find those. 

\begin{proposition} [{\cite[Cor.~3.4]{bennerGoyal2017BT_quadBilinear}}]
	Consider the QB system~\eqref{eq:QB1}--\eqref{eq:QB2}. If $\A$ is a stable matrix, then the \textit{truncated Gramians} $\P_{\mathcal{T}}, \Q_{\mathcal{T}}$ are defined as solutions to 
	\begin{align}
	\A\P_{\mathcal{T}} + \P_{\mathcal{T}}\A^\top + \H(\P_1\otimes \P_1) \H^\top + \sum_{k=1}^m \N_k \P_1 \N_k^\top + \B\B^\top & = \bzero \label{eq:genLyapP}, \\
	\A^\top \Q_{\mathcal{T}} + \Q_{\mathcal{T}} \A + \mathcal{H}^{(2)}(\P_1\otimes \Q_1)(\mathcal{H}^{(2)})^\top + \sum_{k=1}^m \N_k^\top \Q_1 \N_k + \C^\top \C & =\bzero, \label{eq:genLyapQ}
	\end{align}
	where $\P_1$ and $\Q_1$ are solutions to the standard linear Lyapunov equations
	\begin{equation}
	\A \P_1 + \P_1 \A^\top + \B\B^\top  = \bzero, \qquad 
	\A^\top \Q_1 + \Q_1 \A + \C^\top\C  = \bzero. \label{eq:LyapLin}
	\end{equation}
\end{proposition}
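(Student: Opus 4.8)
The plan is to exploit the sequential (triangular) structure of the four equations: equations~\eqref{eq:LyapLin} do not involve $\P_{\mathcal{T}}$ or $\Q_{\mathcal{T}}$, so they can be solved first, and their solutions then enter \eqref{eq:genLyapP}--\eqref{eq:genLyapQ} only through constant inhomogeneous terms. The whole claim then reduces to the classical well-posedness theory for a single Lyapunov equation with a stable coefficient matrix, applied four times in succession.

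First I would invoke the standard result that for a stable matrix $\A$ the Lyapunov equation $\A X + X \A^\top + M = \bzero$ has the unique solution $X = \int_0^\infty e^{\A t} M e^{\A^\top t}\,\mathrm{d}t$, the integral converging precisely because the spectrum of $\A$ lies in the open left half-plane. Applying this to the two equations in \eqref{eq:LyapLin} with $M = \B\B^\top$ and, after transposing (note $\A^\top$ is stable iff $\A$ is), with $M = \C^\top\C$, yields unique solutions $\P_1$ and $\Q_1$. Since $\B\B^\top$ and $\C^\top\C$ are symmetric positive semidefinite, the integral representation shows that $\P_1$ and $\Q_1$ are symmetric positive semidefinite as well.

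Next I would treat $\P_1$ and $\Q_1$ as now fixed and read \eqref{eq:genLyapP} as a Lyapunov equation in the single unknown $\P_{\mathcal{T}}$ with constant right-hand side $M_P = \H(\P_1\otimes\P_1)\H^\top + \sum_{k=1}^m \N_k \P_1 \N_k^\top + \B\B^\top$, and likewise \eqref{eq:genLyapQ} as a Lyapunov equation in $\Q_{\mathcal{T}}$ with right-hand side $M_Q = \mathcal{H}^{(2)}(\P_1\otimes\Q_1)(\mathcal{H}^{(2)})^\top + \sum_{k=1}^m \N_k^\top \Q_1 \N_k + \C^\top\C$. Because $\A$ is stable, the same classical theorem again delivers unique solutions $\P_{\mathcal{T}}$ and $\Q_{\mathcal{T}}$, establishing that the truncated Gramians are well defined.

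The remaining point --- and the one I expect to require the most care --- is to verify that $M_P$ and $M_Q$ are themselves symmetric positive semidefinite, so that $\P_{\mathcal{T}}$ and $\Q_{\mathcal{T}}$ inherit this property and the factorizations $\P_{\mathcal{T}} = \L_\P \L_\P^\top$, $\Q_{\mathcal{T}} = \L_\Q \L_\Q^\top$ used in the balancing step actually exist. For this I would use that the Kronecker product of two symmetric positive semidefinite matrices is again symmetric positive semidefinite, so that $\P_1\otimes\P_1$ and $\P_1\otimes\Q_1$ are semidefinite; each summand then has the congruence form $L S L^\top$ with $S$ semidefinite (for instance $\H(\P_1\otimes\P_1)\H^\top$ and $\N_k\P_1\N_k^\top$), hence is semidefinite, and a finite sum of semidefinite matrices is semidefinite. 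Feeding a semidefinite right-hand side through the integral representation then yields semidefinite Gramians, which completes the argument.
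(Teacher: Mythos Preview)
Your argument is correct: you exploit the cascaded structure, apply the classical existence and uniqueness theorem for Lyapunov equations with a Hurwitz coefficient matrix four times, and then use the integral representation together with closure of the positive semidefinite cone under Kronecker products, congruence transforms $LSL^\top$, and finite sums to get semidefiniteness of $\P_{\mathcal{T}}$ and $\Q_{\mathcal{T}}$. Nothing is missing.

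However, there is nothing to compare against: the paper does \emph{not} prove this proposition. It is stated as a quotation from~\cite[Cor.~3.4]{bennerGoyal2017BT_quadBilinear} and used as a black box, with no argument supplied. Your write-up therefore goes strictly beyond what the paper provides. If your goal is fidelity to the paper, a one-line citation suffices; if your goal is a self-contained treatment, what you have is exactly the standard route and can stand as is.
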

The Gramians $\P_{\mathcal{T}}, \ \Q_{\mathcal{T}}$ are used to obtain a QB reduced-order model (QB-ROM) as follows: Compute the singular value decomposition $\L_{\Q_\mathcal{T}}^\top \L_{\P_\mathcal{T}}= \mathcal{U} \mathcal{S} \mathcal{V}^\top$ and let its rank $r$ approximation be denoted as $\mathcal{U}_r \mathcal{S}_r \mathcal{V}_r^\top$. The projection matrices are $\W = \L_{\Q_\mathcal{T}} \mathcal{U}_r \mathcal{S}_r^{-1/2}$ and $\V = \L_{\P_\mathcal{T}} \mathcal{V}_r \mathcal{S}_r^{-1/2}$.
The balanced QB-ROM has the form 
\begin{align}
	\dot{\widehat{\x}} & = \widehat{\A} \widehat{\x} + \widehat{\H} (\widehat{\x} \otimes \widehat{\x}) + \sum_{k=1}^m \widehat{\N}_k \widehat{\x} \u_k + \widehat{\B} \u, \label{eq:ROMQB1}\\
	\widehat{\y} &= \widehat{\C}\widehat{\x}.   \label{eq:ROMQB2}
\end{align}
with the matrices $ \widehat{\A} = \W^\top \A \V, \quad \widehat{\B} = \W^\top \B, \quad \widehat{\C} = \C \V, \quad \widehat{\N}_k = \W^\top \N_k \V, \quad \widehat{\H} = \W^\top \H (\V\otimes \V)$. The Gramians of the QB-ROM are balanced, i.e., the matrices $\widehat{\P}_\mathcal{T}=\widehat{\Q}_\mathcal{T}$ are equal and diagonal.

\section{Lifting systems to QB form: Introducing structure} \label{sec:Lifting_structure}
In Section~\ref{sec:StructureQB} we illustrate that lifting transformations for nonlinear systems often lead to a specific structure of the lifted system.  Section~\ref{sec:tubReactor_model} introduces a nonlinear tubular reactor model, and its subsequent lifted QB model with this special structure.

\subsection{Structure in lifted QB systems} \label{sec:StructureQB}
As an example of a higher-order dynamical system that can be lifted to QB form, consider the nonlinear  ordinary differential equation
\begin{equation} \label{eq:FOMpoly}
	\dot{x} = \sum_{k=1}^d a_k x^k + bu, 
\end{equation}
where $x(t)$ is the one-dimensional state variable, $a_1, \ldots, a_d$ are known coefficients, $x^k$ is the $k$th power of $x(t)$, $u(t)$ is an input function,  and $b \in \mathbb{R}$ is a parameter. We consider here a one-dimensional ordinary differential equation (ODE) for illustration and note that the lifting framework directly applies to $n$-dimensional nonlinear systems. 
Our goal is to rewrite this system in QB form by introducing auxiliary variables. For this example, we define auxiliary variables $w_\ell := x^{\ell+1}$, so that the governing equation can be written as a linear system
\begin{equation} \label{eq:FOMpolyLifted}
\dot{x} = a_1 x + \sum_{k=1}^{d-1} a_{k-1} w_k + bu. 
\end{equation}
The auxiliary state dynamics are computed via the chain rule (or Lie derivative) as
$$
\dot{w}_\ell = \frac{d}{dt}x^{\ell+1} = (\ell+1) x^{\ell} \dot{x} = (\ell+1) w_{\ell-1} \left (  a_1 x + \sum_{k=1}^{d-1} a_{k-1} w_k + bu  \right ),
$$
and here $w_0:=x$.
Note that the right-hand-side  contains quadratic products $w_{\ell -1}(t)w_k(t)$ as well as bilinear products $b (\ell+1) u(t) w_{\ell -1}(t)$. Another choice to write the auxiliary dynamics would be to replace $w_{\ell-1}x$ with $w_\ell$. Next, we show two examples of polynomial and non-polynomial models where lifting leads to a similar structure.

\noindent \textbf{Example 1\ }
\textit{	Consider the ODE
	\begin{equation}
		\dot{x} = ax^3 + bu. \label{eq:ExampleODE}
	\end{equation}
	Our goal is to lift this system to QB form by introducing auxiliary variables. Let
	$$
	w_1=x^2,\quad  w_2 = x^3, 
	$$ 
	be the auxiliary variables. Note that the original dynamics then become linear, i.e., $\dot{x} = aw_2 + bu$. We compute the auxiliary state dynamics $\dot{w}_1 = 2 x \dot{x} = 2a x w_2 + 2xbu$ and $\dot{w}_2 = 3 x^2 \dot{x} = 3 w_1 (aw_2 + bu)$. 
	Taken together, the nonlinear equation~\eqref{eq:ExampleODE} with one state variable is equivalent to the QB-ODE with three state variables
	\begin{align} \label{eq:ex1-1}
		\dot{x}    =  aw_2  +  bu, \qquad 
		\dot{w}_1  =  2a xw_2 + 2bxu, \qquad 
		\dot{w}_2  =  3a w_1 w_2 + 3b w_1 u.
	\end{align}
	The system~\eqref{eq:ex1-1} is equivalent to the original nonlinear equation~\eqref{eq:ExampleODE}, in the sense that both systems yield the same solution $x(t)$.
	We can write \eqref{eq:ex1-1} in the QB form of equation~\eqref{eq:QB1} with the lifted state $\x=[x, w_1, w_2]^\top$ and matrices 
	\begin{equation} \label{eq:Ex1_matrices}
	\A = \left[
	\begin{array}{ccc}
	0 & 0 & a  \\
	0 & 0 & 0 \\
	0 & 0 & 0\\
	\end{array}
	\right], \
	\H = 
	\begin{bmatrix}
	0 & 0 & 0 & 0 & 0 & 0 & 0 & 0 & 0\\
	0 & 0 & 2a & 0 & 0 & 0 & 0 & 0 & 0\\	
	0 & 0 & 0 & 0 & 0 & 3a & 0 & 0 & 0\\
	\end{bmatrix}, \
	\N_1 = \left[
	\begin{array}{ccccc}
	0 & 0 & 0 \\
	2b & 0 & 0  \\	
	0 & 3b & 0 \\
	\end{array}
	\right], \
	\B = \left[
	\begin{array}{c}
	1 \\
	0 \\
	0 \\
	\end{array}
	\right].
	\end{equation} 
}

\noindent \textbf{Example 2\ }
\textit{Consider an ODE with non-polynomial term and linear term:
\begin{equation*}
\dot{x} = ax +e^{-x} + bu,
\end{equation*}
which we lift to QB form. We introduce $w_1 = e^{-x}$ as the auxiliary variable, so that $\dot{w}_1 = -w_1 ( ax + w_1 + bu ) = -aw_1x - w_1^2 -w_1bu$, which can be written in QB form as
\begin{equation} \label{eq:Ex2_matrices}
\begin{bmatrix} \dot{x} \\ \dot{w}_1 \end{bmatrix}
= 
\begin{bmatrix}
a & 1 \\
0 & 0 \\
\end{bmatrix}
\begin{bmatrix} x \\ w_1 \end{bmatrix}
+ 
\begin{bmatrix}
0 & 0 & 0 & 0\\
0 & 0 & -a & -1 \\
\end{bmatrix}
\left ( \begin{bmatrix} x \\ w_1 \end{bmatrix} \otimes \begin{bmatrix} x \\ w_1 \end{bmatrix} \right )
+ 
\begin{bmatrix}
0 & 0 \\
0 & -b \\
\end{bmatrix}
\begin{bmatrix} x \\ w_1 \end{bmatrix}
u 
+ \begin{bmatrix} b \\ 0 \end{bmatrix}
u.
\end{equation} 		
}

We observe that the lifted matrices in the previous two examples (equations~\eqref{eq:Ex1_matrices} and \eqref{eq:Ex2_matrices}) have the following block-structure:
\begin{equation}\label{eq:QB-structure}
\A   = \begin{bmatrix} \A_{11} & \A_{12} \\ \bzero & \bzero \end{bmatrix}, \quad 
\H  = \begin{bmatrix} \bzero  \\ \H_{2} \end{bmatrix},  \quad
\N  = \begin{bmatrix} \bzero & \bzero \\ \N_{21} & \N_{22} \end{bmatrix}, \quad
\B  = \begin{bmatrix} \B_{1} \\ \bzero  \end{bmatrix},
\end{equation}
where $\bzero$ is a matrix of zeros of appropriate dimensions. 
Note, that the system matrix of the lifted system~\eqref{eq:QB-structure} has zero eigenvalues. Moreover, the original input $\B u(t)$ only affects the original state, but appears as a bilinear term in the auxiliary states after application of the chain rule. Thus, the $\N$ matrix is nonzero only in the rows corresponding to the auxiliary states. The matrix structure in equation~\eqref{eq:QB-structure} commonly occurs in QB systems that were obtained from lifting transformations, for instance the systems given in Example~1 and Example~2. Nevertheless, due to the non-uniqueness of lifting transformations, the block structure in~\eqref{eq:QB-structure} is both a result of the original dynamical system form and the chosen lifting transformation.

\subsection{Tubular reactor model} \label{sec:tubReactor_model}
We consider a non-adiabatic tubular reactor model with single reaction as in~\cite{heinemann1981tubular} and follow the discretization in \cite{zhou2012thesis}. The model describes the evolution of the species concentration $\spe(s,t)$ and temperature $\theta(s,t)$ with spatial variable $s\in (0,1)$ and time $t >0$.  The PDE model is	
\begin{align}
	\frac{\partial \spe}{\partial t}   & = \frac{1}{Pe} \frac{\partial^2 \spe}{\partial s^2} - \frac{\partial \spe}{\partial s} - \Dam  f(\spe, \theta; \gamma), \label{eq:PDEa} \\
	\frac{\partial \theta}{\partial t} & = \frac{1}{Pe} \frac{\partial^2 \theta}{\partial s^2} - \frac{\partial \theta}{\partial s} - \beta (\theta - \theta_{\text{ref}}) + \mathcal{B}\Dam f(\spe,\theta; \gamma) + b u, \label{eq:PDEb}
\end{align}
with input function $u=u(t)$ and a function $b=b(s)$ encoding the influence of the input to the computational domain. The parameters are the Damk{\"o}hler number $\Dam$, P{\`e}clet number $Pe$ as well as known constants $\mathcal{B},\ \beta, \ \theta_{\text{ref}}, \gamma$. The polynomial nonlinear term that drives the reaction is
\begin{equation*}
	f(\spe, \theta; \gamma) = \spe (c_0 + c_1\theta + c_2\theta^2 + c_3\theta^3)
\end{equation*}
with given constants $c_0, \ldots c_3$.
%
\noindent Robin boundary conditions are imposed on the left boundary of the domain and Neumann boundary conditions on the right
\begin{align*}
	\frac{\partial \spe}{\partial s}(0,t) & = Pe (\spe(0,t)-1),    &\frac{\partial \theta}{\partial s}(0,t) & = Pe (\theta(0,t)-1), \\
	\frac{\partial \spe}{\partial s}(1,t) &= 0, 					 &\frac{\partial \theta}{\partial s}(1,t) &=0.
\end{align*}
The initial conditions are prescribed as $\spe(s,0) = \spe_0(s)$, and  $\theta(s,0) = \theta_0(s)$.
The output of interest is the temperature oscillation at the reactor exit, i.e., the quantity $y(t) = \theta(s=1,t)$.
The diffusive and convective terms are approximated with second-order centered differences in the interior of the domain. Second-order forward and backward difference schemes are used for the inflow and outflow boundary conditions, respectively.
The finite difference approximation of dimension $2n$ of the tubular reactor PDE  is then
\begin{align}
	\dot{\boldsymbol{\spe}} & = \A_\spe \boldsymbol{\spe} + \b_\spe - \mathcal{D} \ \boldsymbol{\spe} \odot  (\boldsymbol{c}_0 + \boldsymbol{c}_1\odot \boldsymbol{\theta} + \boldsymbol{c}_2\odot \boldsymbol{\theta}^2 + \boldsymbol{c}_3\odot \boldsymbol{\theta}^3), \label{eq:tub_FOM1}\\
	\dot{\boldsymbol{\theta}} & = \A_\theta \boldsymbol{\theta} + \b_\theta + \b u+ \mathcal{B}\mathcal{D} \ \boldsymbol{\spe} \odot  (\boldsymbol{c}_0 + \boldsymbol{c}_1\odot\boldsymbol{\theta} + \boldsymbol{c}_2\odot \boldsymbol{\theta}^2 + \boldsymbol{c}_3\odot\boldsymbol{\theta}^3) \label{eq:tub_FOM2}.
\end{align}
Here, the (Hadamard) componentwise product of two vectors is denoted as $[\boldsymbol{\spe} \odot\boldsymbol{\theta}]_i = \boldsymbol{\spe}_i\boldsymbol{\theta}_i$, and the powers of vectors are also taken componentwise. The constant vector $ \b_\spe$ encodes the boundary condition, and  $\b_\theta$ is the sum of contributions from the boundary conditions and the $\beta\cdot\theta_\text{ref}$ term in equation~\eqref{eq:PDEb}. The term $\A_\spe \boldsymbol{\spe}$ is a discretized version of the advection-diffusion terms $\frac{1}{Pe} \spe_{ss} - \spe_s $ and $\A_\theta \boldsymbol{\theta}$ is a discretized representation of the $\frac{1}{Pe} \theta_{ss} - \theta_s - \beta \theta$ terms. 

We quadratic-bilinearize the finite dimensional system via a lifting transformation.\footnote{Lifting is not unique, and there might be lower-order quadratic systems than the one suggested here. However, how to obtain them is an open problem.}\footnote{Lifting for a tubular reactor model with Arrhenius reaction term has been considered by the authors in \cite{KW18nonlinearMORliftingPOD}, however that lifting transformation resulted in QB-DAEs, for which balancing model reduction is an open problem.}
To lift the system, we introduce the auxiliary variables
\begin{equation*}
	\mathbf{w}_1 = \boldsymbol{\spe}\odot\boldsymbol{\theta}, \quad \mathbf{w}_2 = \boldsymbol{\spe}\odot\boldsymbol{\theta}^2, \quad \mathbf{w}_3 = \boldsymbol{\spe} \odot \boldsymbol{\theta}^3,\quad  \mathbf{w}_4 = \boldsymbol{\theta}^2, \quad \mathbf{w}_5 = \boldsymbol{\theta}^3,
\end{equation*}
so that we have the equivalent QB system of dimension $7n$:
\begin{align}
	\dot{\boldsymbol{\spe}} & = [\A_\spe - \mathcal{D}\text{diag}(\boldsymbol{c}_0)] \boldsymbol{\spe} + \mathbf{b}_\spe - \mathcal{D}  (\boldsymbol{c}_1 \odot\mathbf{w}_1 + \boldsymbol{c}_2 \odot\mathbf{w}_2 + \boldsymbol{c}_3\odot \mathbf{w}_3), \ \label{eq:spe}  \\
	\dot{\boldsymbol{\theta}} & = \A_\theta \boldsymbol{\theta} + \mathcal{B}\mathcal{D}\boldsymbol{c}_0\odot \boldsymbol{\psi} + \mathbf{b}_\theta + \b u + \mathcal{B}\mathcal{D} \ (\boldsymbol{c}_1 \odot\mathbf{w}_1 + \boldsymbol{c}_2\odot \mathbf{w}_2 + \boldsymbol{c}_3\odot \mathbf{w}_3), \label{eq:temp} \\
	\dot{\mathbf{w}}_1 & = \dot{\boldsymbol{\spe}}\odot\boldsymbol{\theta} + \boldsymbol{\spe}\odot\dot{\boldsymbol{\theta}}, \label{eq:w1}\\
	\dot{\mathbf{w}}_2 & = \dot{\boldsymbol{\spe}}\odot\mathbf{w}_4 + 2\mathbf{w}_1\odot\dot{\boldsymbol{\theta}}, \\
	\dot{\mathbf{w}}_3 & = \dot{\boldsymbol{\spe}}\odot\mathbf{w}_5 + 3\mathbf{w}_2\odot\dot{\boldsymbol{\theta}}, \\
	\dot{\mathbf{w}}_4 & = 2\boldsymbol{\theta} \odot\dot{\boldsymbol{\theta}}, \\
	\dot{\mathbf{w}}_5 & = 3\mathbf{w}_4 \odot\dot{\boldsymbol{\theta}}. \label{eq:w5}
\end{align}

Note, that the original state equations for species and temperature, \eqref{eq:spe}--\eqref{eq:temp}, are linear in the new state variables, whereas the differential equations for the added state variables \eqref{eq:w1}--\eqref{eq:w5} are quadratic in the state variables (after inserting $\dot{\boldsymbol{\spe}}, \dot{\boldsymbol{\theta}}$, which are linear). Hence, the system is of QB form \eqref{eq:QB1}--\eqref{eq:QB2},
where $\x(t) = [\boldsymbol{\spe}^\top \ \boldsymbol{\theta}^\top \ \mathbf{w}_1^\top \ \mathbf{w}_2^\top \ \mathbf{w}_3^\top \ \mathbf{w}_4^\top \ \mathbf{w}_5^\top]^\top \in \mathbb{R}^{7n}$ is the new lifted state.
The lifted state equations \eqref{eq:spe}--\eqref{eq:w5} again induce a specific structure of the system matrices:
\begin{subequations} \label{eq:QBODE_tubular_matrices}
	\begin{align}
		\A  & = \begin{bmatrix} \A_{11} & \A_{12} \\ \bzero_{5n\times 2n} & \bzero_{5n}\end{bmatrix}  \in \mathbb{R}^{7n\times 7n}, \quad \A_{11} = \begin{bmatrix} \A_\spe - \mathcal{D}\text{diag}(\boldsymbol{c}_0) & \bzero \\ \bzero & \A_\theta  \end{bmatrix}, \\
		\H & = \begin{bmatrix} \bzero_{2n\times 4n^2} & \bzero_{2n\times 45n^2} \\ \H_{21} & \H_{22} \end{bmatrix} \in \mathbb{R}^{7n\times (7n)^2}, \qquad
		\N  = \begin{bmatrix} \bzero_{2n\times 2n} & \bzero_{2n \times 5n} \\ \N_{21} & \N_{22} \end{bmatrix} \in \mathbb{R}^{7n\times 7n}, \\
		\B & = \begin{bmatrix} \b_\spe & \bzero \\ \b_\theta & \b \\ \bzero_{5n} & \bzero_{5n} \end{bmatrix} \in \mathbb{R}^{7n\times 2}, \qquad
		\C  = \begin{bmatrix} \C_1 \ \  \bzero_{5n} \end{bmatrix} \in \mathbb{R}^{1 \times 7n}, \quad \C_1 = [\bzero_{2n-1} \ 1].
	\end{align}
\end{subequations}
Note that the original dynamics are linear after the lifting is applied, and the auxiliary states have no linear parts, as the time derivative of the auxiliary variables consistently results in purely quadratic dynamics. Thus, the matrix $\A$ has zero eigenvalues, and $\H$ is such that there is no contribution in the rows corresponding to the original state. Moreover, the bilinear state-input interactions only occur in the auxiliary states. This structure is exactly the one presented in equation~\eqref{eq:QB-structure}.

\section{Balancing for lifted systems with special structure} \label{sec:balancingLIftedSys}
Section~\ref{subsec:stabilization} introduces an artificial numerical parameter that guarantees existence of truncated Gramians. Section~\ref{subsec:BalLin} derives the Gramians for the linear part of the system, while Section~\ref{subsec:balQuadratic} presents the truncated Gramians for the QB system.

\subsection{Artificial stabilization} \label{subsec:stabilization}
The special structure of the system matrix $\A$ that arises from lifting transformations leads to zero eigenvalues. Hence, there are no unique solutions to the Lyapunov equations~\eqref{eq:genLyapP}--\eqref{eq:LyapLin} and the standard QB balanced truncation method from \cite{bennerGoyal2017BT_quadBilinear} cannot be applied. To circumvent this problem, we observe that if the original nonlinear system is polynomial (cf. \eqref{eq:FOMpoly} and its lifted version~\eqref{eq:FOMpolyLifted}) then $-\alpha w_\ell + \alpha x^{\ell+1} =0$ for any $\alpha\in \mathbb{R}$. We can use this equality to artificially stabilize the linear term. For example, in equation~\eqref{eq:w1} of the tubular reactor ODE, we can introduce a term $-\alpha \mathbf{w}_1 + \alpha (\boldsymbol{\spe}\odot \boldsymbol{\theta})$. Applying this to every auxiliary variable, the linear system matrix becomes  
\begin{equation}
	\A(\alpha) = \begin{bmatrix} \A_{11} & \A_{12} \\ \bzero & -\alpha \I \end{bmatrix}, \label{eq:Aeps}
\end{equation}
and the quadratic tensor $\H(\alpha) = \H + \widetilde{\H}(\alpha)$, where $\widetilde{\H}(\alpha)$ contains the artificially added terms such that $-\alpha [\bzero^\top, \mathbf{w}^\top]^\top + \widetilde{\H}(\alpha)(\x \otimes \x) = \bzero$. With this system matrix $\A(\alpha)$, the Lyapunov equations \eqref{eq:genLyapP}--\eqref{eq:LyapLin} have unique positive semi-definite solutions if $\A_{11}$ is stable. Similar concepts have been used recently in~\cite{pulch2017balanced}. For all results in this section to hold, we require that $\A_{11}$ is stable.

\subsection{Balancing with Gramians of linearized system} \label{subsec:BalLin}
The Lyapunov equations for the linear system part, Eq.~\eqref{eq:LyapLin} are needed to compute the truncated Gramians for the QB system. The next two propositions detail the computation of those linear Lyapunov equations. 

\begin{proposition} \label{prop:LinLyap}
	The solutions to the standard linear Lyapunov equations \eqref{eq:LyapLin} with $\A(\alpha)$ from equation~\eqref{eq:Aeps} are given by 
	\begin{equation} \label{eq:P1Q1}
		\P_1 = \begin{bmatrix} \P_{11} & \bzero \\ \bzero & \bzero  \end{bmatrix},
		\qquad 
		\Q_1 = \begin{bmatrix} \Q_{11} & \Q_{12}(\alpha) \\ \Q_{12}^\top(\alpha) & \frac{1}{2\alpha} \widetilde{\Q}_{22}(\alpha) \end{bmatrix} ,
	\end{equation}	
	where 
		\begin{align*}
		\bzero & = 	\A_{11} \P_{11} + \P_{11} \A_{11}^\top + \B_1 \B_1^\top, \\
			\bzero &  = \A_{11}^\top \Q_{11} + \Q_{11} \A_{11} + \C_1^\top \C_1, \\
			\Q_{12}(\alpha)&  = - (\A_{11}^\top - \alpha \I)^{-1} \Q_{11} \A_{12}, \\
			\widetilde{\Q}_{22}(\alpha)  & = \A_{12}^\top \Q_{12}(\alpha) + \Q_{12}(\alpha)^\top \A_{12}.
		\end{align*}
\end{proposition}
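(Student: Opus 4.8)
Since the statement gives explicit candidate matrices, the plan is to verify that they satisfy the two equations in~\eqref{eq:LyapLin} with $\A=\A(\alpha)$ from~\eqref{eq:Aeps}, and to confirm they are the unique solutions. The method is direct substitution of a symmetric $2\times 2$ block ansatz, matching the four blocks of each equation, using the block-triangular form of $\A(\alpha)$ together with the block sparsity of $\B$ and $\C$ inherited from~\eqref{eq:QB-structure}. Throughout I would exploit symmetry, so that $\P_1\A(\alpha)^\top=(\A(\alpha)\P_1)^\top$ and similarly for $\Q_1$, which halves the block multiplications.

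For the controllability equation $\A(\alpha)\P_1+\P_1\A(\alpha)^\top+\B\B^\top=\bzero$, I would write $\P_1=\left[\begin{smallmatrix}\P_{11}&\P_{12}\\ \P_{12}^\top&\P_{22}\end{smallmatrix}\right]$ and note that $\B\B^\top$ is supported only in its $(1,1)$ block, $\B_1\B_1^\top$. Expanding, the $(2,2)$ block reads $-2\alpha\P_{22}=\bzero$, giving $\P_{22}=\bzero$ because $\alpha\neq 0$. The $(1,2)$ block then collapses to $(\A_{11}-\alpha\I)\P_{12}=\bzero$; since $\A_{11}$ is stable and $\alpha>0$, the number $\alpha$ is not an eigenvalue of $\A_{11}$, so $\A_{11}-\alpha\I$ is invertible and $\P_{12}=\bzero$. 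The remaining $(1,1)$ block is exactly $\A_{11}\P_{11}+\P_{11}\A_{11}^\top+\B_1\B_1^\top=\bzero$, which has a unique solution by stability of $\A_{11}$. This recovers the stated $\P_1$.

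For the observability equation $\A(\alpha)^\top\Q_1+\Q_1\A(\alpha)+\C^\top\C=\bzero$ I would proceed analogously, now with $\C^\top\C$ supported only in its $(1,1)$ block, $\C_1^\top\C_1$. The $(1,1)$ block gives $\A_{11}^\top\Q_{11}+\Q_{11}\A_{11}+\C_1^\top\C_1=\bzero$ for $\Q_{11}$. Unlike the controllability case the off-diagonal block does not vanish: the $(1,2)$ block yields $(\A_{11}^\top-\alpha\I)\Q_{12}+\Q_{11}\A_{12}=\bzero$, and invertibility of $\A_{11}^\top-\alpha\I$ gives $\Q_{12}=-(\A_{11}^\top-\alpha\I)^{-1}\Q_{11}\A_{12}$, matching $\Q_{12}(\alpha)$. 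Finally the $(2,2)$ block reads $\A_{12}^\top\Q_{12}+\Q_{12}^\top\A_{12}-2\alpha\Q_{22}=\bzero$, so $\Q_{22}=\tfrac{1}{2\alpha}\widetilde{\Q}_{22}(\alpha)$ with $\widetilde{\Q}_{22}(\alpha)=\A_{12}^\top\Q_{12}(\alpha)+\Q_{12}(\alpha)^\top\A_{12}$, as claimed.

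The computation is routine; the only steps needing care are the three invocations of $\alpha\neq 0$ and the invertibility of $\A_{11}-\alpha\I$ (equivalently $\A_{11}^\top-\alpha\I$), both of which rest on the standing assumption that $\A_{11}$ is stable together with $\alpha>0$. I would also record that these are the \emph{unique} solutions: since $\A(\alpha)$ is block upper triangular with diagonal blocks $\A_{11}$ and $-\alpha\I$, its spectrum is that of $\A_{11}$ together with $-\alpha$, hence $\A(\alpha)$ is stable for $\alpha>0$ and each equation in~\eqref{eq:LyapLin} has a unique solution; verifying the displayed block matrices therefore suffices. A useful consistency check on the structure is that the reachable subspace of $(\A(\alpha),\B)$ stays within the leading $2n$ coordinates, forcing $\P_1$ to be supported in its $(1,1)$ block, whereas $\C\A(\alpha)$ already excites the auxiliary block through $\A_{12}$, explaining why $\Q_1$ is full.
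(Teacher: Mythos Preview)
Your proposal is correct and follows essentially the same route as the paper: a symmetric $2\times2$ block ansatz, the block-triangular form of $\A(\alpha)$, and successive identification of the blocks (for $\P_1$: $(2,2)\Rightarrow(1,2)\Rightarrow(1,1)$; for $\Q_1$: $(1,1)\Rightarrow(1,2)\Rightarrow(2,2)$). Your additional remarks on uniqueness via the spectrum of $\A(\alpha)$ and the reachable-subspace interpretation of the support of $\P_1$ are correct and go slightly beyond what the paper states explicitly, but they do not change the argument.
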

\begin{proof}
	For the controllability Gramian computation we have that
	\begin{equation*}
		\bzero = 	\begin{bmatrix} \A_{11} & \A_{12} \\ \bzero & -\alpha \I \end{bmatrix} 
		\begin{bmatrix} \P_{11} & \P_{12} \\ \P_{12}^\top & \P_{22} \end{bmatrix} + 
		\begin{bmatrix} \P_{11} & \P_{12} \\ \P_{12}^\top & \P_{22} \end{bmatrix}
		\begin{bmatrix} \A_{11}^\top & \bzero \\  \A_{12}^\top  & -\alpha \I \end{bmatrix}
		+ \begin{bmatrix} \B_{1}\B_1^\top & \bzero \\ \bzero & \bzero \end{bmatrix}, 
	\end{equation*}
	which results in the four equations
	\begin{align*}
		\A_{11} \P_{11} + \A_{12} \P_{12}^\top + \P_{11} \A_{11}^\top +\P_{12} \A_{12}^\top +\B_{1}\B_1^\top & =\bzero,  \\
		(\A_{11} - \alpha \I ) \P_{12} +\A_{12} \P_{22}&  = \bzero, \\
		\P_{12}^\top (\A_{11}^\top - \alpha \I ) + \P_{22} \A_{12}^\top &= \bzero, \\
		-2 \alpha \P_{22} &= \bzero.
	\end{align*}	
	Since $(\A_{11}^\top - \alpha \I )$ is invertible, the last three equations yield that $\P_{22}=\bzero$ and $\P_{12}=\bzero$. 	
	Next, we consider the observability Lyapunov equation
	\begin{equation*}
		\bzero = \begin{bmatrix} \A_{11}^\top & \bzero \\  \A_{12}^\top  & -\alpha \I \end{bmatrix}
		\begin{bmatrix} \Q_{11} & \Q_{12} \\ \Q_{12}^\top & \Q_{22} \end{bmatrix} + 
		\begin{bmatrix} \Q_{11} & \Q_{12} \\ \Q_{12}^\top & \Q_{22} \end{bmatrix}
		\begin{bmatrix} \A_{11} & \A_{12} \\ \bzero & -\alpha \I \end{bmatrix} 
		+ \begin{bmatrix} \C_1^\top \C_1 & \bzero \\ \bzero & \bzero \end{bmatrix} .
	\end{equation*}
	Blockwise multiplication results in the three distinct equations
	\begin{align*}
		\A_{11}^\top \Q_{11} + \Q_{11} \A_{11} + \C_1^\top \C_1  &  = \bzero,\\
		(\A_{11}^\top - \alpha \I) \Q_{12}  +  \Q_{11} \A_{12} & =\bzero, \\
		\A_{12}^\top \Q_{12} + \Q_{12}^\top \A_{12} - 2\alpha \Q_{22} & = \bzero,
	\end{align*}
	which we can solve successively to get $\Q_{11}, \Q_{12}, \Q_{22}$. We define $\widetilde{\Q}_{22}{(\alpha)} = \A_{12}^\top \Q_{12}{ (\alpha)} + \Q_{12}^\top{(\alpha)} \A_{12}$ so that the explicit dependence on $1/\alpha$ appears as $\Q_{22}(\alpha) = \frac{1}{2\alpha} \widetilde{\Q}_{22}(\alpha)$.
\end{proof}	
\begin{proposition}
	Consider the Gramians from Proposition~\ref{prop:LinLyap}, and let $\P_{11} = \L_{\P_{11}} \L_{\P_{11}}^\top$ and $\Q_{11} = \L_{\Q_{11}} \L_{\Q_{11}}^\top$ be Cholesky factorizations. Let the singular value decomposition approximate the product $\L_{\Q_{11}}^\top \L_{\P_{11}} \approx \mathcal{U}_{r} \mathcal{S}_r \mathcal{V}_r^\top$. Then the projection matrices for the balancing transformation are given as 
	\begin{equation} \label{eq:VWlinear}
		\V = \begin{bmatrix} \L_{\P_{11}} \mathcal{V}_r \mathcal{S}_r^{-1/2} \\ \bzero  \end{bmatrix}, \qquad 
		\W = \begin{bmatrix} \L_{\Q_{11}} \mathcal{U}_{r}  \mathcal{S}_r^{-1/2}   \\   \Q_{12}(\alpha)^\top \L_{\Q_{11}}^{-\top} \mathcal{U}_r \mathcal{S}_r^{-1/2}    \end{bmatrix}.
	\end{equation}
\end{proposition}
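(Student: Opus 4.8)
The plan is to apply the standard square-root balanced truncation recipe---the very one used earlier to form $\W = \L_{\Q_\mathcal{T}}\mathcal{U}_r\mathcal{S}_r^{-1/2}$ and $\V = \L_{\P_\mathcal{T}}\mathcal{V}_r\mathcal{S}_r^{-1/2}$ for the QB Gramians---but now to the \emph{linear} Gramians $\P_1,\Q_1$ of Proposition~\ref{prop:LinLyap}, and then to exploit their block structure to collapse the computation onto the $(1,1)$ blocks. First I would produce explicit factors. For $\P_1$ the factor $\L_{\P_1} = \begin{bmatrix}\L_{\P_{11}}\\ \bzero\end{bmatrix}$ satisfies $\L_{\P_1}\L_{\P_1}^\top = \P_1$ directly from the block form in~\eqref{eq:P1Q1}. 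For $\Q_1$ I would use a block Cholesky factorization $\L_{\Q_1} = \begin{bmatrix}\L_{\Q_{11}} & \bzero \\ \Q_{12}(\alpha)^\top\L_{\Q_{11}}^{-\top} & \L_{22}\end{bmatrix}$, where $\L_{22}$ is any Cholesky factor of the Schur complement $\tfrac{1}{2\alpha}\widetilde{\Q}_{22}(\alpha) - \Q_{12}(\alpha)^\top\Q_{11}^{-1}\Q_{12}(\alpha)$; this factor exists because $\Q_1 \succeq 0$, and a short block multiplication confirms $\L_{\Q_1}\L_{\Q_1}^\top = \Q_1$, with the off-diagonal block reproducing $\Q_{12}(\alpha)$ and the $(2,2)$ block reproducing $\tfrac{1}{2\alpha}\widetilde{\Q}_{22}(\alpha)$. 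Here I invoke $\Q_{11}\succ 0$, so that $\L_{\Q_{11}}$ is invertible and $\L_{\Q_{11}}^{-\top}$ is well defined.

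The crucial simplification comes when forming the product $\L_{\Q_1}^\top\L_{\P_1}$ that feeds the SVD. Because the lower block row of $\L_{\P_1}$ vanishes, only the top rows of $\L_{\Q_1}$ survive, and these are exactly $\begin{bmatrix}\L_{\Q_{11}} & \bzero\end{bmatrix}$; hence $\L_{\Q_1}^\top\L_{\P_1} = \begin{bmatrix}\L_{\Q_{11}}^\top\L_{\P_{11}}\\ \bzero\end{bmatrix}$. In particular both $\L_{22}$ and the coupling block $\Q_{12}(\alpha)^\top\L_{\Q_{11}}^{-\top}$ drop out of the object whose SVD we take. I would then observe that padding a matrix with zero rows changes neither its singular values nor its right singular vectors and only zero-pads its left singular vectors: writing the rank-$r$ SVD of the $(1,1)$ product as $\L_{\Q_{11}}^\top\L_{\P_{11}}\approx\mathcal{U}_r\mathcal{S}_r\mathcal{V}_r^\top$ (exactly the decomposition named in the statement), the corresponding rank-$r$ factors of $\L_{\Q_1}^\top\L_{\P_1}$ are $\begin{bmatrix}\mathcal{U}_r\\ \bzero\end{bmatrix}$, $\mathcal{S}_r$, and $\mathcal{V}_r$.

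It then remains to substitute these factors into the square-root formulas $\W = \L_{\Q_1}\begin{bmatrix}\mathcal{U}_r\\ \bzero\end{bmatrix}\mathcal{S}_r^{-1/2}$ and $\V = \L_{\P_1}\mathcal{V}_r\mathcal{S}_r^{-1/2}$, and carry out the block multiplications. For $\V$ the zero lower block of $\L_{\P_1}$ immediately yields $\V = \begin{bmatrix}\L_{\P_{11}}\mathcal{V}_r\mathcal{S}_r^{-1/2}\\ \bzero\end{bmatrix}$. For $\W$, multiplying $\L_{\Q_1}$ against $\begin{bmatrix}\mathcal{U}_r\\ \bzero\end{bmatrix}$ selects its first block column, so the top block becomes $\L_{\Q_{11}}\mathcal{U}_r\mathcal{S}_r^{-1/2}$ and the bottom block becomes $\Q_{12}(\alpha)^\top\L_{\Q_{11}}^{-\top}\mathcal{U}_r\mathcal{S}_r^{-1/2}$, which is precisely~\eqref{eq:VWlinear}.

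I expect the only genuine subtlety---rather than a true obstacle---to be the bookkeeping around the zero-padded SVD: one must argue cleanly that truncating the SVD of $\L_{\Q_1}^\top\L_{\P_1}$ is equivalent to truncating the SVD of $\L_{\Q_{11}}^\top\L_{\P_{11}}$, and that the discarded factor $\L_{22}$ (hence the entire $\tfrac{1}{2\alpha}\widetilde{\Q}_{22}(\alpha)$ block of $\Q_1$) never influences $\V$ or $\W$. A secondary point worth stating explicitly is the standing nondegeneracy assumption $\Q_{11}\succ 0$ (together with $\A_{11}$ stable, already assumed in Section~\ref{subsec:stabilization}), which guarantees both the invertibility of $\L_{\Q_{11}}$ appearing in~\eqref{eq:VWlinear} and the existence of the block Cholesky factor of $\Q_1$.
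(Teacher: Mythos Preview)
Your proposal is correct and follows essentially the same approach as the paper: block Cholesky factorization of $\P_1$ and $\Q_1$ (the latter via the Schur complement of $\Q_{11}$), observation that the zero block in $\L_{\P_1}$ collapses $\L_{\Q_1}^\top\L_{\P_1}$ to a zero-padded copy of $\L_{\Q_{11}}^\top\L_{\P_{11}}$, and substitution of the resulting padded SVD factors into the square-root balancing formulas. The only cosmetic difference is that the paper writes $\L_{\P_1}$ as a square matrix with an extra zero block column (so that the right singular vectors are also zero-padded), whereas you take $\L_{\P_1}$ rectangular; both choices yield the same $\V$ and $\W$.
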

\begin{proof}
	To obtain the balancing transformation, we need to compute the product $\L_{\Q_1}^\top \L_{\P_1}$. We have that 
	\begin{equation*}
		\P_1 = \L_{\P_1} \L_{\P_1}^\top = \begin{bmatrix} \L_{\P_{11}} & \bzero \\ \bzero & \bzero   \end{bmatrix} \begin{bmatrix} \L_{\P_{11}}^\top & \bzero \\ \bzero & \bzero   \end{bmatrix}. 
	\end{equation*}
	For the Cholesky factorization of $\Q_1$ we introduce the Schur complement $ \S(\alpha) = \frac{1}{2\alpha}\widetilde{\Q}_{22}(\alpha) - \Q_{12}(\alpha)^\top \Q_{11}^{-1} \Q_{12}(\alpha) = \L_{\S(\alpha)} \L_{\S(\alpha)}^\top$,  so that we have 
	\begin{equation*}
		\Q_1 = \L_{\Q_1} \L_{\Q_1}^\top = \begin{bmatrix} \L_{\Q_{11}} & \ \ \bzero \\ \Q_{12}(\alpha)^\top \L_{\Q_{11}}^{-\top} & \ \ \L_{\S(\alpha)}  \end{bmatrix}   \begin{bmatrix} \L_{\Q_{11}}^\top & \ \  \L_{\Q_{11}}^{-1} \Q_{12}(\alpha)  \\  \bzero & \ \  \L_{\S(\alpha)}^\top  \end{bmatrix},
	\end{equation*}
	as follows from the results for the block-diagonal form of the Cholesky factorization (which can be verified by direct computation). 
	To obtain the balancing transformation, we consider the approximation via singular value decomposition 
	\begin{equation*}
		\L_{\Q_1}^\top \L_{\P_1} = \begin{bmatrix} \L_{\Q_{11}}^\top & \  \L_{\Q_{11}}^{-1} \Q_{12}(\alpha)  \\ \bzero & \ \L_{\S(\alpha)}^\top  \end{bmatrix} \begin{bmatrix} \L_{\P_{11}} & \ \bzero \\ \bzero & \ \bzero   \end{bmatrix}
		= \begin{bmatrix}  \L_{\Q_{11}}^\top \L_{\P_{11}} & \bzero \\ \bzero & \bzero   \end{bmatrix} \approx  \begin{bmatrix} \mathcal{U}_r \\ \bzero  \end{bmatrix} \mathcal{S}_r [ \mathcal{V}_r^\top \  \bzero ].
	\end{equation*}
	We then compute
	\begin{align*}
		\V & = \begin{bmatrix} \L_{\P_{11}} & \bzero \\ \bzero & \bzero   \end{bmatrix} \begin{bmatrix} \mathcal{V}_r \\ \bzero  \end{bmatrix} \mathcal{S}_r^{-1/2} = \begin{bmatrix} \L_{\P_{11}} \mathcal{V}_r \mathcal{S}_r^{-1/2} \\ \bzero  \end{bmatrix}, \\
		\W & = \begin{bmatrix} \L_{\Q_{11}} & \ \ \bzero \\ \Q_{12}(\alpha)^\top \L_{\Q_{11}}^{-\top} &\ \ \L_{\S(\alpha)}  \end{bmatrix}  \begin{bmatrix} \mathcal{U}_r \\ \bzero \end{bmatrix}  \mathcal{S}_r^{-1/2} 
		= \begin{bmatrix}  \L_{\Q_{11}} \mathcal{U}_r  \mathcal{S}_r^{-1/2} \\  \Q_{12}(\alpha)^\top \L_{\Q_{11}}^{-\top} \mathcal{U}_r \mathcal{S}_r^{-1/2} \end{bmatrix}.
	\end{align*}
	which completes the proof. 
\end{proof}

The two propositions show that only Lyapunov equations in the original model dimensions are necessary (and not in the lifted dimensions), which is computationally appealing. For example, for the tubular reactor model in Section~\ref{sec:tubReactor_model}, the original model dimensions are $2n$ and the lifted dimensions are $7n$ and we only solve $2n$-dimensional Lyapunov equations.

\subsection{Balancing with truncated (quadratic) Gramians } \label{subsec:balQuadratic}
The next two propositions consider the solution of the  Lyapunov equations~\eqref{eq:genLyapP}--\eqref{eq:genLyapQ} for the truncated Gramians, which take into consideration the QB nature of the problem by incorporating $\H$ and $\N$. 
\begin{proposition} \label{prop:PT}
	Let $\P_{1}$ be the solution from Proposition~\ref{prop:LinLyap}. The truncated Gramian from equation~\eqref{eq:genLyapP}  for the QB system is given as 
	\begin{equation} \label{eq:PT_QB}
		{\P}_\mathcal{T} = \P_1 + \frac{1}{2\alpha} \begin{bmatrix} \widetilde{\P}_{11}(\alpha) & \widetilde{\P}_{12}(\alpha) \\ \widetilde{\P}_{12}(\alpha)^\top & \widetilde{\P}_{22}(\alpha)  \end{bmatrix}
	\end{equation}
	via the following equations:
		\begin{align*}
			\widetilde{\P}_{22}(\alpha) & =  \H_{21}(\alpha)  (\P_{11} \otimes \P_{11}) \H_{21}^\top(\alpha) + \N_{21}\P_{11} \N_{21}^\top, \\
			\widetilde{\P}_{12}(\alpha) & = - (\A_{11} - \alpha \I )^{-1} \A_{12}\widetilde{\P}_{22}(\alpha),  \\
			\bzero & = \A_{11} \widetilde{\P}_{11}(\alpha) + \widetilde{\P}_{11}(\alpha) \A_{11}^\top + [\A_{12}\widetilde{\P}_{12}(\alpha)^\top + \widetilde{\P}_{12}(\alpha) \A_{12}^\top] .
		\end{align*}
\end{proposition}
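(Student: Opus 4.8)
The plan is to solve equation~\eqref{eq:genLyapP} with $\A = \A(\alpha)$ blockwise, exactly as in the proof of Proposition~\ref{prop:LinLyap}, after first identifying the block structure of the constant right-hand side. Write the forcing term as $\mathcal{F}(\alpha) := \H(\alpha)(\P_1\otimes\P_1)\H(\alpha)^\top + \N\P_1\N^\top + \B\B^\top$. The essential simplification comes from Proposition~\ref{prop:LinLyap}, which gives $\P_1$ with a single nonzero block; introducing the selection matrix $\E := [\I_{2n}\ \bzero]^\top \in \mathbb{R}^{7n\times 2n}$ I can factor $\P_1 = \E\P_{11}\E^\top$. Together with $\B = [\B_1^\top\ \bzero]^\top$ this immediately yields that $\B\B^\top$ has only the block $\B_1\B_1^\top$ in the $(1,1)$ position, and together with $\N = \left[\begin{smallmatrix}\bzero & \bzero \\ \N_{21} & \N_{22}\end{smallmatrix}\right]$ it gives $\N\P_1\N^\top = (\N\E)\P_{11}(\N\E)^\top$ with $\N\E = [\bzero^\top\ \N_{21}^\top]^\top$, so this term contributes only $\N_{21}\P_{11}\N_{21}^\top$ in the $(2,2)$ block.

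For the quadratic term I would use the mixed-product property of the Kronecker product: since $\P_1 = \E\P_{11}\E^\top$,
\[ \H(\alpha)(\P_1\otimes\P_1)\H(\alpha)^\top = \big[\H(\alpha)(\E\otimes\E)\big](\P_{11}\otimes\P_{11})\big[\H(\alpha)(\E\otimes\E)\big]^\top. \]
Because the top block of $\H(\alpha)$ vanishes, the surviving factor is $\H(\alpha)(\E\otimes\E) = [\bzero^\top\ \H_{21}(\alpha)^\top]^\top$, which is exactly the definition of $\H_{21}(\alpha)$; hence this term contributes only $\H_{21}(\alpha)(\P_{11}\otimes\P_{11})\H_{21}(\alpha)^\top$ in the $(2,2)$ block. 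This Kronecker bookkeeping---verifying that the zero blocks of $\P_1$ annihilate every column of $\H(\alpha)$ except those collected in $\H_{21}(\alpha)$---is the only step requiring genuine care; everything else is block matrix algebra. Collecting the three contributions yields $\mathcal{F}(\alpha) = \left[\begin{smallmatrix}\B_1\B_1^\top & \bzero \\ \bzero & \widetilde{\P}_{22}(\alpha)\end{smallmatrix}\right]$ with $\widetilde{\P}_{22}(\alpha)$ precisely as stated.

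With $\mathcal{F}(\alpha)$ determined, I substitute $\P_\mathcal{T} = \left[\begin{smallmatrix} P_A & P_B \\ P_B^\top & P_C\end{smallmatrix}\right]$ into $\A(\alpha)\P_\mathcal{T} + \P_\mathcal{T}\A(\alpha)^\top + \mathcal{F}(\alpha) = \bzero$ and equate blocks. The $(2,2)$ block gives $-2\alpha P_C + \widetilde{\P}_{22}(\alpha) = \bzero$, hence $P_C = \frac{1}{2\alpha}\widetilde{\P}_{22}(\alpha)$. The $(1,2)$ block reads $(\A_{11}-\alpha\I)P_B + \A_{12}P_C = \bzero$; since $\A_{11}$ is stable and $\alpha>0$ the matrix $\A_{11}-\alpha\I$ is invertible, so $P_B = \frac{1}{2\alpha}\widetilde{\P}_{12}(\alpha)$ with $\widetilde{\P}_{12}(\alpha) = -(\A_{11}-\alpha\I)^{-1}\A_{12}\widetilde{\P}_{22}(\alpha)$. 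Finally I set $P_A = \P_{11} + \frac{1}{2\alpha}\widetilde{\P}_{11}(\alpha)$ in the $(1,1)$ block $\A_{11}P_A + P_A\A_{11}^\top + \A_{12}P_B^\top + P_B\A_{12}^\top + \B_1\B_1^\top = \bzero$ and subtract the linear identity $\A_{11}\P_{11} + \P_{11}\A_{11}^\top + \B_1\B_1^\top = \bzero$ from Proposition~\ref{prop:LinLyap}; the $\P_{11}$ and $\B_1\B_1^\top$ terms cancel, and after multiplying by $2\alpha$ what remains is $\A_{11}\widetilde{\P}_{11}(\alpha) + \widetilde{\P}_{11}(\alpha)\A_{11}^\top + [\A_{12}\widetilde{\P}_{12}(\alpha)^\top + \widetilde{\P}_{12}(\alpha)\A_{12}^\top] = \bzero$, the stated Lyapunov equation for $\widetilde{\P}_{11}(\alpha)$, which has a unique solution because $\A_{11}$ is stable. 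Assembling $P_A$, $P_B$, $P_C$ then produces the claimed expression~\eqref{eq:PT_QB}.
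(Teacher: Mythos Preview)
Your proof is correct and follows essentially the same blockwise argument as the paper: identify that the $\H(\alpha)(\P_1\otimes\P_1)\H(\alpha)^\top$ and $\N\P_1\N^\top$ terms contribute only to the $(2,2)$ block, then back-solve the $(2,2)$, $(1,2)$, and $(1,1)$ block equations in that order. The only cosmetic difference is that the paper first splits $\P_\mathcal{T}=\P_1+\P_2(\alpha)$ (so the $\B\B^\top$ term is absorbed at the outset and one solves directly for $\P_2$), whereas you carry $\B\B^\top$ through and subtract the linear Lyapunov identity for $\P_{11}$ at the end; the two are trivially equivalent.
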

\begin{proof}
	The generalized controllability Lyapunov equation~\eqref{eq:genLyapP} is a linear equation, therefore we can decompose the truncated Gramian $\P_\mathcal{T} = \P_1 + \P_2(\alpha)$, where $\P_1$ is the solution  from Proposition~\ref{prop:LinLyap}. Therefore, $\P_2(\alpha)$ is the solution to
	\begin{equation*}
		\A(\alpha)\P_2 + \P_2\A(\alpha)^\top + \H(\alpha)(\P_1\otimes \P_1) \H(\alpha)^\top +  \N \P_1 \N^\top  = 0.
	\end{equation*}
	With the matrices from above, we compute 
	\begin{align*}
		\N\P_1\N^\top & = \begin{bmatrix} \bzero & \bzero \\ \N_{21} & \N_{22} \end{bmatrix}  \begin{bmatrix} \P_{11} & \bzero \\ \bzero & \bzero  \end{bmatrix} \begin{bmatrix} \bzero & \N_{21}^\top \\ \bzero & \N_{22}^\top \end{bmatrix}  = \begin{bmatrix} \bzero & \bzero \\ \bzero & \N_{21}\P_{11}\N_{21}^\top  \end{bmatrix}, \\
		\H(\alpha)(\P_1\otimes \P_1) \H(\alpha)^\top & 
		=  \begin{bmatrix} \bzero & \ \bzero \\ \bzero & \  \H_{21}(\alpha)(\P_{11}\otimes \P_{11}) \H_{21}(\alpha)^\top
		\end{bmatrix}.
	\end{align*}
	The symmetric positive semi-definite solution $\P_2(\alpha)$ needs to satisfy 
	\begin{align*}
		\bzero & = 	\begin{bmatrix} \A_{11} & \A_{12} \\ \bzero & -\alpha \I \end{bmatrix} 
		\begin{bmatrix} \widehat{\P}_{11} & \widehat{\P}_{12} \\ \widehat{\P}_{12}^\top & \widehat{\P}_{22} \end{bmatrix} + 
		\begin{bmatrix} \widehat{\P}_{11} & \widehat{\P}_{12} \\ \widehat{\P}_{12}^\top & \widehat{\P}_{22} \end{bmatrix}
		\begin{bmatrix} \A_{11}^\top & \bzero \\  \A_{12}^\top  & -\alpha \I \end{bmatrix} \\
		& + \begin{bmatrix} \bzero & \bzero \\ \bzero & \H_{21}(\alpha)(\P_{11}\otimes \P_{11}) \H_{21}(\alpha)^\top \end{bmatrix}
		+ \begin{bmatrix} \bzero & \bzero \\ \bzero & \N_{21}\P_{11}\N_{21}^\top  \end{bmatrix},
	\end{align*}
	which yields the equations
	\begin{align}
		\A_{11} \widehat{\P}_{11} + \widehat{\P}_{11} \A_{11}^\top + \A_{12} \widehat{\P}_{12}^\top + \widehat{\P}_{12} \A_{12}^\top & =\bzero, \label{eq:1}\\
		(\A_{11} - \alpha \I ) \widehat{\P}_{12} + \A_{12} \widehat{\P}_{22} & =\bzero, \label{eq:2}\\
		\widehat{\P}_{12}^\top (\A_{11}^\top - \alpha \I )+ \widehat{\P}_{22}\A_{12}^\top &=\bzero, \label{eq:3} \\
		-2 \alpha \widehat{\P}_{22} + \H_{21}(\alpha)(\P_{11} \otimes \P_{11})\H_{21}(\alpha)^\top + \N_{21} \P_{11} \N_{21}^\top &=\bzero.
	\end{align}
	We define $\widetilde{\P}_{22}(\alpha) = [\H_{21}(\alpha)(\P_{11} \otimes \P_{11})\H_{21}(\alpha)^\top + \N_{21} \P_{11} \N_{21}^\top]$, so that $\widehat{\P}_{22}(\alpha) = \frac{1}{2\alpha} \widetilde{\P}_{22}(\alpha)$. 
	Due to the symmetry of $\widehat{\P}_{22}(\alpha)$, equations \eqref{eq:2} and \eqref{eq:3} are the same, and after substitution, we obtain 
$
		\widehat{\P}_{12}(\alpha) = - \frac{1}{2\alpha}(\A_{11} - \alpha \I )^{-1} \A_{12} \widetilde{\P}_{22}(\alpha) := \frac{1}{2\alpha}\widetilde{\P}_{12}(\alpha), 
$
	with $\widetilde{\P}_{12}(\alpha) : = - (\A_{11} - \alpha \I )^{-1} \A_{12} \widetilde{\P}_{22}(\alpha)$.
	Substitution of $\widehat{\P}_{12}(\alpha)$ into equation~\eqref{eq:1} yields the result. 
\end{proof}
\begin{proposition}  \label{prop:QT}
	Let the matrix $\Q_1$ be as in Proposition~\ref{prop:LinLyap}. The truncated Gramian $\Q_{\mathcal{T}}$ that solves equation~\eqref{eq:genLyapQ} is given by
	\begin{equation}  \label{eq:QT_QB}
		\Q_\mathcal{T} = \Q_1 + \begin{bmatrix}  \widehat{\Q}_{11} (\alpha)& \widehat{\Q}_{12}(\alpha) \\  \widehat{\Q}_{12}(\alpha)^\top & \widehat{\Q}_{22}(\alpha) \end{bmatrix},
	\end{equation}
	where
		\begin{align*}
			\bzero & = \A_{11}^\top \widehat{\Q}_{11}(\alpha) + \widehat{\Q}_{11}(\alpha) \A_{11} + \widetilde{\H}_{11}(\alpha) + \frac{1}{2\alpha} \N_{21}^\top \widetilde{\Q}_{22}(\alpha) \N_{21} , \\
			\widehat{\Q}_{12}(\alpha) & = - (\A_{11}^\top- \alpha \I )^{-1} \left (\widetilde{\H}_{12}(\alpha) + \frac{1}{2\alpha} \N_{21}^\top \widetilde{\Q}_{22}(\alpha) \N_{22} + \widehat{\Q}_{11}(\alpha) \A_{12} \right ), \\
			\widehat{\Q}_{22}(\alpha) & = \frac{1}{2\alpha} \left[   \A_{12}^\top \widehat{\Q}_{12}(\alpha) + \widehat{\Q}_{12}^\top(\alpha) \A_{12} + \widetilde{\H}_{22}(\alpha) + \frac{1}{2\alpha} \N_{22}^\top \widetilde{\Q}_{22}(\alpha) \N_{22} \right] .
		\end{align*}	
\end{proposition}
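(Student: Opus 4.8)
The plan is to mirror the argument for Proposition~\ref{prop:PT}, using that the generalized observability Lyapunov equation~\eqref{eq:genLyapQ} is linear in $\Q_\mathcal{T}$. First I would split $\Q_\mathcal{T} = \Q_1 + \Q_2(\alpha)$, where $\Q_1$ is the known solution from Proposition~\ref{prop:LinLyap} that already absorbs the $\C^\top\C$ term. The remainder $\Q_2(\alpha)$ then solves the homogeneous equation
\begin{equation*}
\A(\alpha)^\top \Q_2 + \Q_2 \A(\alpha) + \mathcal{H}^{(2)}(\alpha)(\P_1 \otimes \Q_1)\big(\mathcal{H}^{(2)}(\alpha)\big)^\top + \N^\top \Q_1 \N = \bzero,
\end{equation*}
with $\A(\alpha)$ from~\eqref{eq:Aeps}. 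Writing $\Q_2(\alpha) = \begin{bmatrix} \widehat{\Q}_{11} & \widehat{\Q}_{12} \\ \widehat{\Q}_{12}^\top & \widehat{\Q}_{22} \end{bmatrix}$ and multiplying out the block products, the symmetry of the source reduces the matrix equation to three distinct block equations (the $(2,1)$ block being the transpose of the $(1,2)$ block).

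Next I would evaluate the two source terms blockwise. The bilinear term $\N^\top\Q_1\N$ is a direct product: because $\N$ has zero top rows, only the $(2,2)$ block $\tfrac{1}{2\alpha}\widetilde{\Q}_{22}(\alpha)$ of $\Q_1$ survives the multiplication, producing $\tfrac{1}{2\alpha}\N_{21}^\top\widetilde{\Q}_{22}(\alpha)\N_{21}$, $\tfrac{1}{2\alpha}\N_{21}^\top\widetilde{\Q}_{22}(\alpha)\N_{22}$, and $\tfrac{1}{2\alpha}\N_{22}^\top\widetilde{\Q}_{22}(\alpha)\N_{22}$ in the $(1,1)$, $(1,2)$, and $(2,2)$ blocks. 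This is the essential contrast with Proposition~\ref{prop:PT}: there $\P_1$ was block-diagonal with a single nonzero block, so the source lived only in the $(2,2)$ corner, whereas here $\Q_1$ contributes through its nonzero $(2,2)$ block into every block, so $\Q_2$ is fully populated. Defining $\widetilde{\H}_{11}(\alpha), \widetilde{\H}_{12}(\alpha), \widetilde{\H}_{22}(\alpha)$ as the blocks of the quadratic term, I would then solve in the order the equations decouple: the $(1,1)$ block is a standard Lyapunov equation for $\widehat{\Q}_{11}(\alpha)$; its solution feeds the $(1,2)$ block, which gives $\widehat{\Q}_{12}(\alpha)$ by inverting $(\A_{11}^\top - \alpha\I)$; and the $(2,2)$ block then yields $\widehat{\Q}_{22}(\alpha)$ explicitly, matching the stated formulas.

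The main obstacle is the quadratic term, since it involves the mode-2 matricization $\mathcal{H}^{(2)}$ rather than $\H = \mathcal{H}^{(1)}$. Unlike $\mathcal{H}^{(1)}$, whose zero top block rows are immediate from~\eqref{eq:QB-structure}, the mode-2 reshaping does not inherit that zero-block pattern, so its block structure must be deduced from the entries of $\mathcal{H}$ together with the assumed symmetry in the second and third modes. What makes the computation tractable is that $\P_1$ vanishes outside its $(1,1)$ block, so $\P_1 \otimes \Q_1$ is supported only on the Kronecker block that pairs the original-state indices in the first factor; restricting $\mathcal{H}^{(2)}(\alpha)(\P_1 \otimes \Q_1)(\mathcal{H}^{(2)}(\alpha))^\top$ to this support is what collapses it into the three blocks $\widetilde{\H}_{11}, \widetilde{\H}_{12}, \widetilde{\H}_{22}$. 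Aligning the column partition of $\mathcal{H}^{(2)}$ with the Kronecker block structure of $\P_1 \otimes \Q_1$ is the step requiring the most bookkeeping; once these blocks are in hand, the successive solve described above completes the proof.
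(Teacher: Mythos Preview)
Your proposal is correct and mirrors the paper's proof almost exactly: split off $\Q_1$, compute $\N^\top\Q_1\N$ blockwise (only the $(2,2)$ block $\tfrac{1}{2\alpha}\widetilde{\Q}_{22}$ of $\Q_1$ survives), partition the $\mathcal{H}^{(2)}$ term into $\widetilde{\H}_{ij}$, and solve the resulting three block equations in the order $(1,1)\to(1,2)\to(2,2)$. One simplification: the paper treats the $\widetilde{\H}_{ij}(\alpha)$ purely as a notational block partition of the full symmetric matrix $\mathcal{H}^{(2)}(\alpha)(\P_1\otimes\Q_1)(\mathcal{H}^{(2)}(\alpha))^\top$ and never unpacks the mode-2 matricization further, so the ``main obstacle'' you anticipate does not actually arise in the proof.
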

\begin{proof}
	We begin by computing
	\begin{equation*}
		\N^\top \Q_1 \N  = \begin{bmatrix} \bzero & \N_{21}^\top \\ \bzero & \N_{22}^\top \end{bmatrix}    \begin{bmatrix} \Q_{11} & \Q_{12}(\alpha) \\ \Q_{12}^\top(\alpha) & \frac{1}{2\alpha} \widetilde{\Q}_{22}(\alpha) \end{bmatrix} \begin{bmatrix} \bzero & \bzero \\ \N_{21} & \N_{22} \end{bmatrix}  
		= \frac{1}{2\alpha} \begin{bmatrix}
			\N_{21}^\top \widetilde{\Q}_{22} \N_{21} &  \N_{21}^\top \widetilde{\Q}_{22} \N_{22} \\ \N_{22}^\top \widetilde{\Q}_{22} \N_{21} & \N_{22}^\top \widetilde{\Q}_{22} \N_{22} \\ \end{bmatrix},
	\end{equation*}
	and for notational convenience partition the full matrix 
	\begin{equation*}
		\mathcal{H}^{(2)}(\alpha)(\P_1\otimes \Q_1)(\mathcal{H}^{(2)}(\alpha))^\top =: \begin{bmatrix}
			\widetilde{\H}_{11}(\alpha) & \widetilde{\H}_{12}(\alpha) \\ \widetilde{\H}_{12}(\alpha)^\top & \widetilde{\H}_{22}(\alpha) \\
		\end{bmatrix},
	\end{equation*}
	which is a symmetric matrix since $\P_1$ and $\Q_1$ are symmetric Gramians. 
	We then solve for the truncated Gramian via
	\begin{align*}
		\bzero & = \begin{bmatrix}\A_{11}^\top & \bzero \\  \A_{12}^\top  & -\alpha \I \end{bmatrix}
		\begin{bmatrix} \widehat{\Q}_{11} & \widehat{\Q}_{12} \\ \widehat{\Q}_{12}^\top & \widehat{\Q}_{22} \end{bmatrix} + 
		\begin{bmatrix} \widehat{\Q}_{11} & \widehat{\Q}_{12} \\ \widehat{\Q}_{12}^\top & \widehat{\Q}_{22} \end{bmatrix}
		\begin{bmatrix} \A_{11} & \A_{12} \\ \bzero & -\alpha \I \end{bmatrix} \nonumber \\
		& + \begin{bmatrix} \widetilde{\H}_{11}(\alpha) & \widetilde{\H}_{12}(\alpha) \\ \widetilde{\H}_{12}(\alpha)^\top & \widetilde{\H}_{22}(\alpha) \\ \end{bmatrix}
		+ \frac{1}{2\alpha} \begin{bmatrix}
			\N_{21}^\top \widetilde{\Q}_{22} \N_{21} &  \N_{21}^\top \widetilde{\Q}_{22} \N_{22} \\ \N_{22}^\top \widetilde{\Q}_{22} \N_{21} & \N_{22}^\top \widetilde{\Q}_{22} \N_{22} \\ \end{bmatrix}.
	\end{align*}
	Thus, the Gramian solution has to satisfy the equations
	\begin{align*}
		\bzero & = \A_{11}^\top \widehat{\Q}_{11} + \widehat{\Q}_{11} \A_{11} + \widetilde{\H}_{11}(\alpha) + \frac{1}{2\alpha} \N_{21}^\top \widetilde{\Q}_{22} \N_{21} , \\
		\bzero & =  (\A_{11}^\top-\alpha \I) \widehat{\Q}_{12} + \widehat{\Q}_{11} \A_{12} + \widetilde{\H}_{12}(\alpha) + \frac{1}{2\alpha} \N_{21}^\top \widetilde{\Q}_{22} \N_{22}, \\
		\bzero & = \A_{12}^\top \widehat{\Q}_{12} - 2 \alpha \widehat{\Q}_{22}  + \widehat{\Q}_{12}^\top \A_{12} + \widetilde{\H}_{22}(\alpha) + \frac{1}{2\alpha} \N_{22}^\top \widetilde{\Q}_{22} \N_{22},
	\end{align*}
	from which we get the stated result.
\end{proof}	
By decomposing the Lyapunov solution into block form, we can again compute the truncated Gramians by only working in the original dimensions (not the lifted dimensions). Due to the dependence on $1/\alpha$  the Gramians $\Q_{\mathcal{T}}, \P_{\mathcal{T}}$ are not defined for $\alpha \rightarrow 0$ and taking the limit $\alpha \rightarrow 0$ can therefore result in numerical difficulties.
 We summarize our proposed approach below.
\begin{proposition} \label{prop:BalancingAlg}
	An approximate balancing transformation for the QB structure arising in lifting of nonlinear systems  can be computed as follows:
	\begin{enumerate}
		\item Choose  $\alpha \in (0,\infty)$  and compute $\P_\mathcal{T}, \Q_\mathcal{T}$ from Propositions~\ref{prop:PT} and \ref{prop:QT} with $\A(\alpha)$.
		\item Compute Choleksy factors $\ \P_\mathcal{T} = \L_{\P_\mathcal{T}}\L_{\P_\mathcal{T}}^\top$ and $\ \Q_\mathcal{T} = \L_{\Q_\mathcal{T}} \L_{\Q_\mathcal{T}}^\top$.
		\item Compute SVD $\ \L_{\Q_\mathcal{T}}^\top \L_{\P_\mathcal{T}} = \mathcal{U} \mathcal{S} \mathcal{V}^\top$; denote rank $r$ approximation as $\ \mathcal{U}_r \mathcal{S}_r \mathcal{V}_r^\top$.
		\item Projection matrices $\ \W = \L_{\Q_\mathcal{T}} \mathcal{U}_r \mathcal{S}_r^{-1/2}$ and $\ \V = \L_{\P_\mathcal{T}} \mathcal{V}_r \mathcal{S}_r^{-1/2}$.
		\item Matrices for QB-ROM~\eqref{eq:ROMQB1}--\eqref{eq:ROMQB2} are
		\begin{equation*}
			\widehat{\A} = \W^\top \A \V, \quad \widehat{\B} = \W^\top \B, \quad \widehat{\C} = \C \V, \quad \widehat{\N}_k = \W^\top \N_k \V, \quad \widehat{\H} = \W^\top \H (\V\otimes \V).
		\end{equation*}
	\end{enumerate}	
\end{proposition}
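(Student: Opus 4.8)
The plan is to recognize that this proposition introduces no new mathematics but assembles the explicit block formulas of Propositions~\ref{prop:PT} and \ref{prop:QT} into the square-root balanced truncation procedure already recalled at the end of Section~\ref{sec:truncGramians}. First I would establish that Step~1 is well posed. The stabilized matrix $\A(\alpha)$ in \eqref{eq:Aeps} is block upper triangular, so its spectrum is the union of the spectrum of $\A_{11}$ and the eigenvalue $-\alpha$ (with multiplicity equal to the number of auxiliary states); since $\A_{11}$ is assumed stable and $\alpha\in(0,\infty)$, the matrix $\A(\alpha)$ is stable. The cited corollary (the proposition opening Section~\ref{sec:truncGramians}) then guarantees that $\P_\mathcal{T},\Q_\mathcal{T}$ exist as the unique solutions of \eqref{eq:genLyapP}--\eqref{eq:genLyapQ}, and Propositions~\ref{prop:PT} and \ref{prop:QT} supply their closed block form computed entirely from original-dimension Lyapunov solutions. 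This covers Step~1.

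For Step~2 I would verify that $\P_\mathcal{T}$ and $\Q_\mathcal{T}$ are symmetric positive semidefinite, so that (low-rank, rectangular) Cholesky factors $\L_{\P_\mathcal{T}},\L_{\Q_\mathcal{T}}$ exist. Semidefiniteness holds because each solves a stable Lyapunov equation whose constant term is a sum of blocks of the form $\mathbf{M}\mathbf{M}^\top$ (for $\P_\mathcal{T}$ the terms $\H(\P_1\otimes\P_1)\H^\top$, $\sum_k\N_k\P_1\N_k^\top$, and $\B\B^\top$, and analogously for $\Q_\mathcal{T}$), hence positive semidefinite. I would flag here that, owing to the zero block of $\P_1$ in \eqref{eq:P1Q1} and the rank structure induced by lifting, these Gramians are typically rank deficient, so the factors are rectangular; this is exactly the low-rank square-root setting and causes no difficulty.

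Steps~3 and 4 are then the standard square-root balancing step verbatim. I would record the two identities that make the transformation balancing. Using $\L_{\Q_\mathcal{T}}^\top\L_{\P_\mathcal{T}}=\mathcal{U}\mathcal{S}\mathcal{V}^\top$ together with orthonormality of the truncated factors gives $\W^\top\V=\mathcal{S}_r^{-1/2}\mathcal{U}_r^\top\L_{\Q_\mathcal{T}}^\top\L_{\P_\mathcal{T}}\mathcal{V}_r\mathcal{S}_r^{-1/2}=\I_r$, so $\V,\W$ form a Petrov--Galerkin pair; and the short computation $\W^\top\P_\mathcal{T}\W=\V^\top\Q_\mathcal{T}\V=\mathcal{S}_r$ shows that the reduced Gramians coincide and are diagonal. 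That these projected matrices are the truncated Gramians of the reduced QB system \eqref{eq:ROMQB1}--\eqref{eq:ROMQB2} is precisely the statement quoted at the end of Section~\ref{sec:truncGramians} for the generic QB case, which applies here since the reduced operators in Step~5 are defined by exactly those projection formulas. Step~5 is then merely writing out the Petrov--Galerkin reduced matrices.

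The only genuinely delicate point, and the one I would treat most carefully, is the interaction between the artificial parameter $\alpha$ and the definiteness/invertibility the procedure requires: the formulas of Propositions~\ref{prop:LinLyap}--\ref{prop:QT} contain explicit factors $1/\alpha$ and $1/(2\alpha)$, so one must keep $\alpha$ strictly positive and bounded away from $0$ for the Cholesky factors and the product $\L_{\Q_\mathcal{T}}^\top\L_{\P_\mathcal{T}}$ to be well defined, consistent with the remark preceding the proposition that the limit $\alpha\to 0$ is singular. Everything else reduces to assembling the already-proven block formulas with the standard balancing identities, so no new estimates are required.
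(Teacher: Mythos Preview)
The paper does not supply a proof for this proposition at all: it is presented as a summary of the proposed approach, immediately preceded by the sentence ``We summarize our proposed approach below,'' with no \texttt{proof} environment following. Your write-up therefore already exceeds what the paper offers, and the justifications you give (stability of $\A(\alpha)$ from its block-triangular spectrum, semidefiniteness of $\P_\mathcal{T},\Q_\mathcal{T}$ from the Lyapunov right-hand sides, and the square-root identities $\W^\top\V=\I_r$ and $\W^\top\P_\mathcal{T}\W=\V^\top\Q_\mathcal{T}\V=\mathcal{S}_r$) are correct and appropriate.

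One small remark: your balancing identity is stated as $\W^\top\P_\mathcal{T}\W=\V^\top\Q_\mathcal{T}\V=\mathcal{S}_r$, whereas the paper's Section~\ref{sec:truncGramians} phrases the full-rank version as $\V^\top\P\V=\W^\top\Q\W=\mathcal{S}$. Your version is the correct one for the square-root projection setting (and is what actually follows from the SVD factorization $\L_{\Q_\mathcal{T}}^\top\L_{\P_\mathcal{T}}=\mathcal{U}\mathcal{S}\mathcal{V}^\top$), so this is not an error on your part, just a notational discrepancy with the earlier section that you might want to flag if you include the computation explicitly.
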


\section{Numerical Results} \label{sec:numerics}
For the tubular reactor model in Section~\ref{sec:tubReactor_model}, the parameters are $\Dam=0.17$, $Pe=25$, $\mathcal{B}=0.5$, $\beta=2.5$, $\theta_{\text{ref}}\equiv 1$,  $\gamma=5$, and $n=199$. We compare the approximate balanced truncation ROM, denoted QB-BT, from Proposition~\ref{prop:BalancingAlg} with a  ROM computed from POD with discrete empirical interpolation~\cite{deim2010}, denoted as POD-DEIM. For the predictive comparison, the full-order model and ROMs are simulated until $t_f = 30$s with different inputs  $u(t)$ and initial conditions $\x(0)=\x_0$ as given below. 

The POD-DEIM models are obtained as follows: We simulate the full-order model until $t_\text{train}=15$s with training input $u_\text{train}(t)$ and training initial condition $\x_{0,\text{train}}$. We record a snapshot every $0.01$s and store the snapshots in a matrix $\X$, compute the POD basis $\V \in \mathbb{R}^{n\times r}$ of dimension $r$, and project the system matrices in~\eqref{eq:tub_FOM1}--\eqref{eq:tub_FOM2} onto $\V$. For every $r$-dimensional POD-DEIM ROM we use $r$ DEIM interpolation points using the QDEIM algorithm from~\cite{drmac2016QDEIM}.

For the QB-BT models, we discussed in Section~\ref{subsec:stabilization} that the parameter $\alpha\in (0,\infty)$. Here, we found that  $\alpha=20$ resulted in the most accurate balanced ROMs and that for $\alpha \ll 1$ the Gramians' subspaces would lead to unstable ROMs. This could be related to the influence of the field of values of $\A(\alpha)$ on the decay of the singular values of the solution to the Lyapunov equation~\cite{baker2015fast}. Here, for $\alpha>\alpha_c=2.6$, the field of values of $\A(\alpha)$ is in the open left-half plane, which we can choose as a selection criterion for $\alpha$.

Reduced-order models constructed via POD-DEIM depend on the snapshot data that is obtained from training simulations. In contrast, balancing transformations only depend on the matrices defining the full-order QB system $\A,\H, \N, \B, \C$; the basis generation does not require a choice of training data. To illustrate this point, we consider four test cases with different testing conditions and for which the POD-DEIM training choices vary.  For each test case we compare the output error $\sum_{i=1}^s  \vert y(t_i) - y_r(t_i)\vert$ with $s=3,000$ for the POD-DEIM and QB-BT ROMs.  We detail each case  below. \\

\textbf{Case 1}: $u(t) = \cos(t), \ u_\text{train}(t) = u(t), \x_{0,\text{train}} = \x_0$. \ For this test case, training and testing conditions are the same, so we expect POD-DEIM ROMs to accurately reproduce the full-order model (FOM) simulations. The output quantity of interest of the FOM compared to two ROMs with $r=20$ basis functions for this test case is shown in Figure~\ref{fig:test12}, left. Table~\ref{tab:test1} shows the output error for various ROM model sizes. We see that the POD-DEIM ROM outperforms the QB-BT ROM in this case. 

\begin{table}
	\centering
	\caption{Case~1: Output error $\sum_{i=1}^s  \vert y(t_i) - y_r(t_i)\vert$ in ROMs.}
	\begin{tabular}{c| ccccccccc}
		\hline
		ROM& $r$=4 & $r$=6 & $r$=8 & $r$=10 & $r$=12 & $r$=14 & $r$=16 & $r$=18 & $r$=20 \\
		\hline\hline
		QB-BT& 6.51E-04 & 7.00E-04 & 6.98E-04 & 7.09E-04 & 7.06E-04 & 6.84E-04 & 6.66E-04 & 7.05E-04 & 6.95E-04 \\
		POD-DEIM& 2.53E-05 & 1.17E-05 & 7.39E-06 & 6.93E-06 & 5.61E-06 & 5.62E-06 & 5.62E-06 & 5.63E-06 & 5.62E-06 \\
	\end{tabular}
	\label{tab:test1}
\end{table}

\textbf{Case 2}: $u(t) = \cos(t), \ u_\text{train}(t) = u(t), \ \x_{0,\text{train}} = \x_0$; noise = $10\%$
Here we use the same training input and training initial conditions as in Case~1. However, we add noise to the training data, reflecting a situation that practitioners often face where measurements are noise-corrupted. In particular, we use $\tilde{X} = X + 0.1 X(-1 + 2\Xi)$ where $\Xi = \mathcal{N}(0,1)$ is a normally distributed random variable.  Figure~\ref{fig:test12}, right, shows the model output for the FOM and both ROMs for $r=20$ where we see that the POD-DEIM model does not capture the output amplitude well. Table~\ref{tab:test2} shows the output errors for increasing ROM sizes. We observe that the QB-BT ROM outperforms the POD-DEIM model in this case, as the POD-DEIM model suffers from a two orders of magnitude loss in accuracy (compared to Table~\ref{tab:test1}) due to noisy snapshots. 
\begin{table}
	\centering
	\caption{Case~2: Output error $\sum_{i=1}^s  \vert y(t_i) - y_r(t_i)\vert$ in ROMs.}
	\begin{tabular}{c| ccccccccc}
		\hline
		ROM&  $r$=4 & $r$=6 & $r$=8 & $r$=10 & $r$=12 & $r$=14 & $r$=16 & $r$=18 & $r$=20 \\
		\hline\hline
		QB-BT& 6.51E-04 & 7.00E-04 & 6.98E-04 & 7.09E-04 & 7.06E-04 & 6.84E-04 & 6.66E-04 & 7.05E-04 & 6.95E-04 \\
		POD-DEIM& 1.47E-03 & 1.41E-03 & 1.40E-03 & 1.41E-03 & 1.39E-03 & 1.40E-03 & 1.40E-03 & 1.40E-03 & 1.40E-03 \\	
	\end{tabular}
	\label{tab:test2}
\end{table}
\begin{figure}[H]
	\centering
	\includegraphics[width=0.495\textwidth]{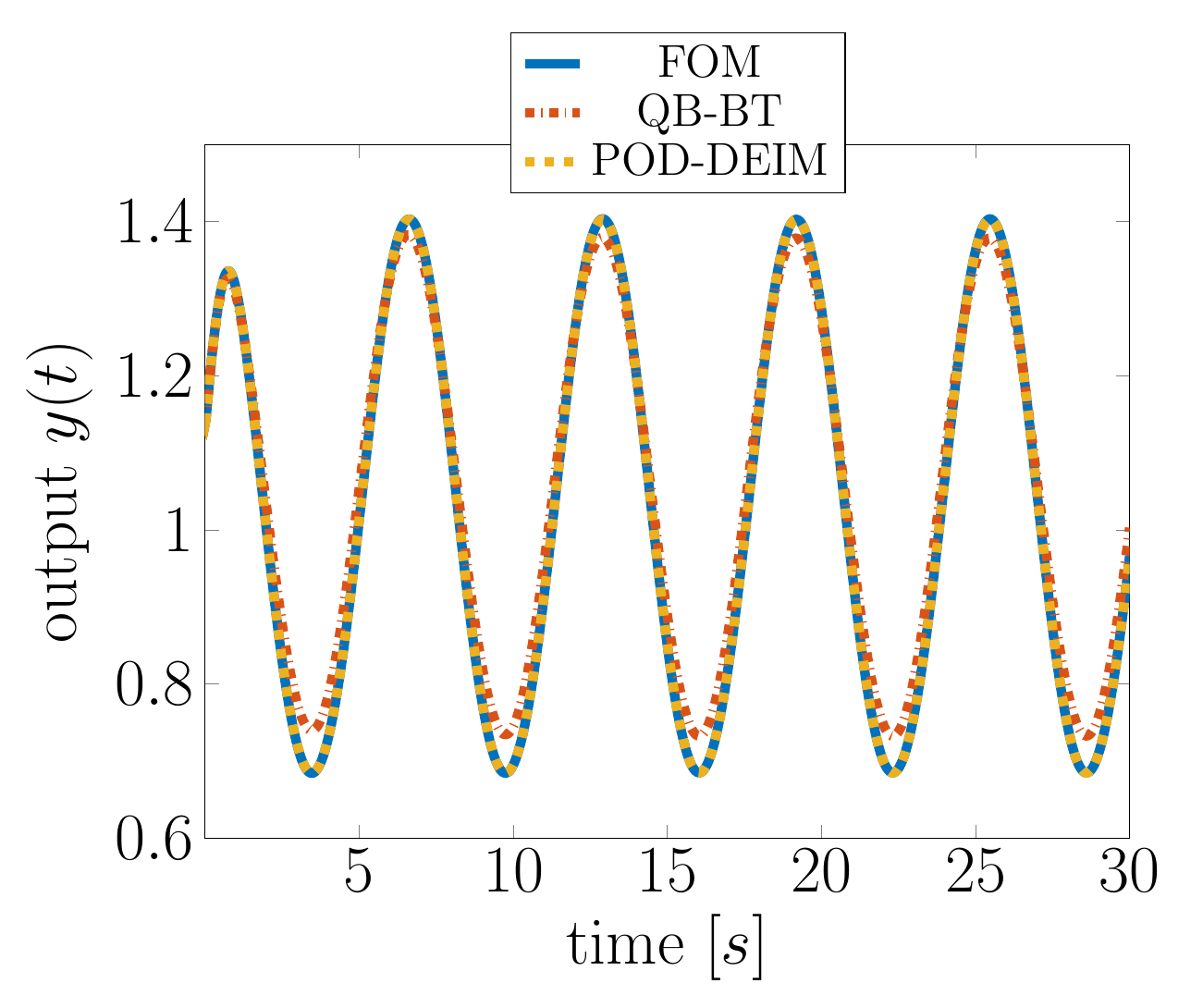}
	\includegraphics[width=0.495\textwidth]{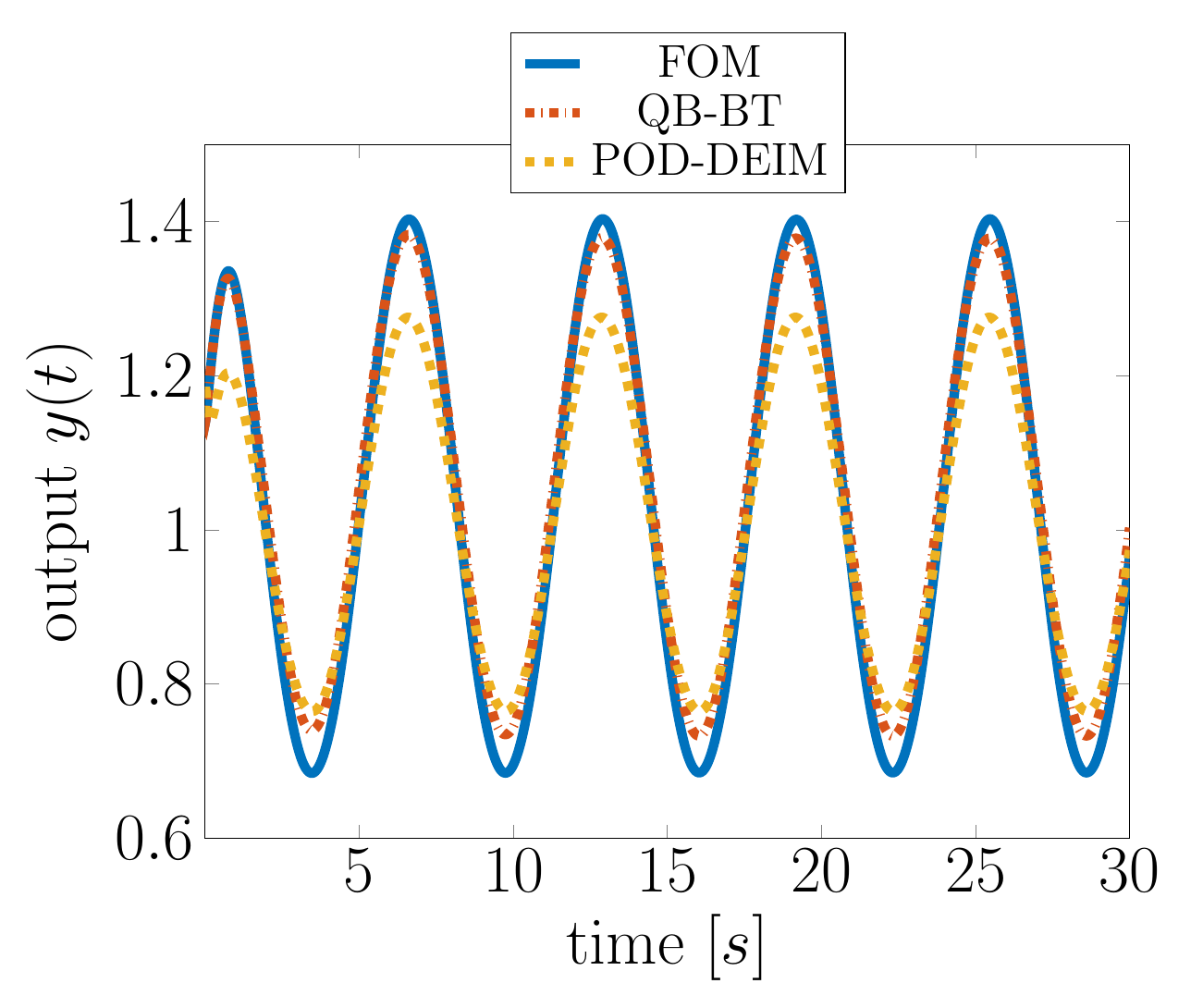}
	\caption{Model outputs for test Case 1 (left) and Case 2 (right): Comparison of FOM with two ROMs for $r=20$ basis functions, respectively.}
	\label{fig:test12}
\end{figure}

\textbf{Case 3}: $u(t) = 0.5(1+ t^2\exp(-t/4)\sin(6t)), \ u_\text{train}(t) = 0.5, \ \x_{0,\text{train}} = \x_0$
This case illustrates a scenario where a practitioner trained a model assuming that input conditions are operating at a constant equilibrium. However during operation of the plant the inputs indeed vary but the oscillations are damped and revert to the equilibrium position of $0.5$. Figure~\ref{fig:test34}, left, shows the model output for the FOM and both ROMs for $r=20$; both ROMs are reproducing the output well. Table~\ref{tab:test3} shows the output errors for increasing ROM sizes. The QB-BT ROM is again more accurate, yet at one model order, $r=20$, the POD-DEIM model is more accurate. 
\begin{table}
	\centering
	\caption{Case~3: Output error $\sum_{i=1}^s  \vert y(t_i) - y_r(t_i)\vert$ in ROMs.}
	\begin{tabular}{c| ccccccccc}
		\hline
		ROM& $r$=4 & $r$=6 & $r$=8 & $r$=10 & $r$=12 & $r$=14 & $r$=16 & $r$=18 & $r$=20 \\
		\hline\hline
		QB-BT& 5.30E-04 & 6.02E-04 & 5.41E-04 & 4.96E-04 & 5.03E-04 & 7.18E-04 & 6.48E-04 & 5.63E-04 & 5.37E-04 \\
		POD-DEIM& 2.42E-03 & 2.22E-03 & 2.15E-03 & 2.15E-03 & 1.96E-03 & 9.11E-04 & 6.55E-04 & 6.16E-04 & 2.37E-04 \\	
	\end{tabular}
	\label{tab:test3}
\end{table}

\textbf{Case 4}: $u(t) = \cos(t), \ u_\text{train}(t) = 0.5, \ \x_{0,\text{train}}(1:n) =0, \ \x_{0,\text{train}}(n+1:2n)=1$. Here, the training initial condition is different from the initial condition used for prediction, i.e., $\x_{0,\text{train}} \neq \x_{0}$. Figure~\ref{fig:test34}, right, shows the model output for the FOM and both ROMs for $r=20$; the QB-BT model slightly outperforms the POD-DEIM model, but both fall short of predicting the full amplitude. Table~\ref{tab:test4} shows the output errors for increasing ROM sizes, where the QB-BT model again seems to outperform the POD-DEIM model for all basis sizes.
\begin{table}
	\centering
	\caption{Case~4: Output error $\sum_{i=1}^s  \vert y(t_i) - y_r(t_i)\vert$ in ROMs.}
	\begin{tabular}{c| ccccccccc}
		\hline
		ROM & $r$=4 & $r$=6 & $r$=8 & $r$=10 & $r$=12 & $r$=14 & $r$=16 & $r$=18 & $r$=20 \\
		\hline\hline
		QB-BT& 3.46E-04 & 3.47E-04 & 3.77E-04 & 3.76E-04 & 3.75E-04 & 3.71E-04 & 3.41E-04 & 3.76E-04 & 3.66E-04 \\
		POD-DEIM& 8.86E-04 & 9.23E-04 & 9.26E-04 & 9.25E-04 & 9.29E-04 & 7.05E-04 & 4.51E-04 & 4.51E-04 & 4.38E-04 \\	
	\end{tabular}
	\label{tab:test4}
\end{table}
\begin{figure}[H]
	\centering
	\includegraphics[width=0.495\textwidth]{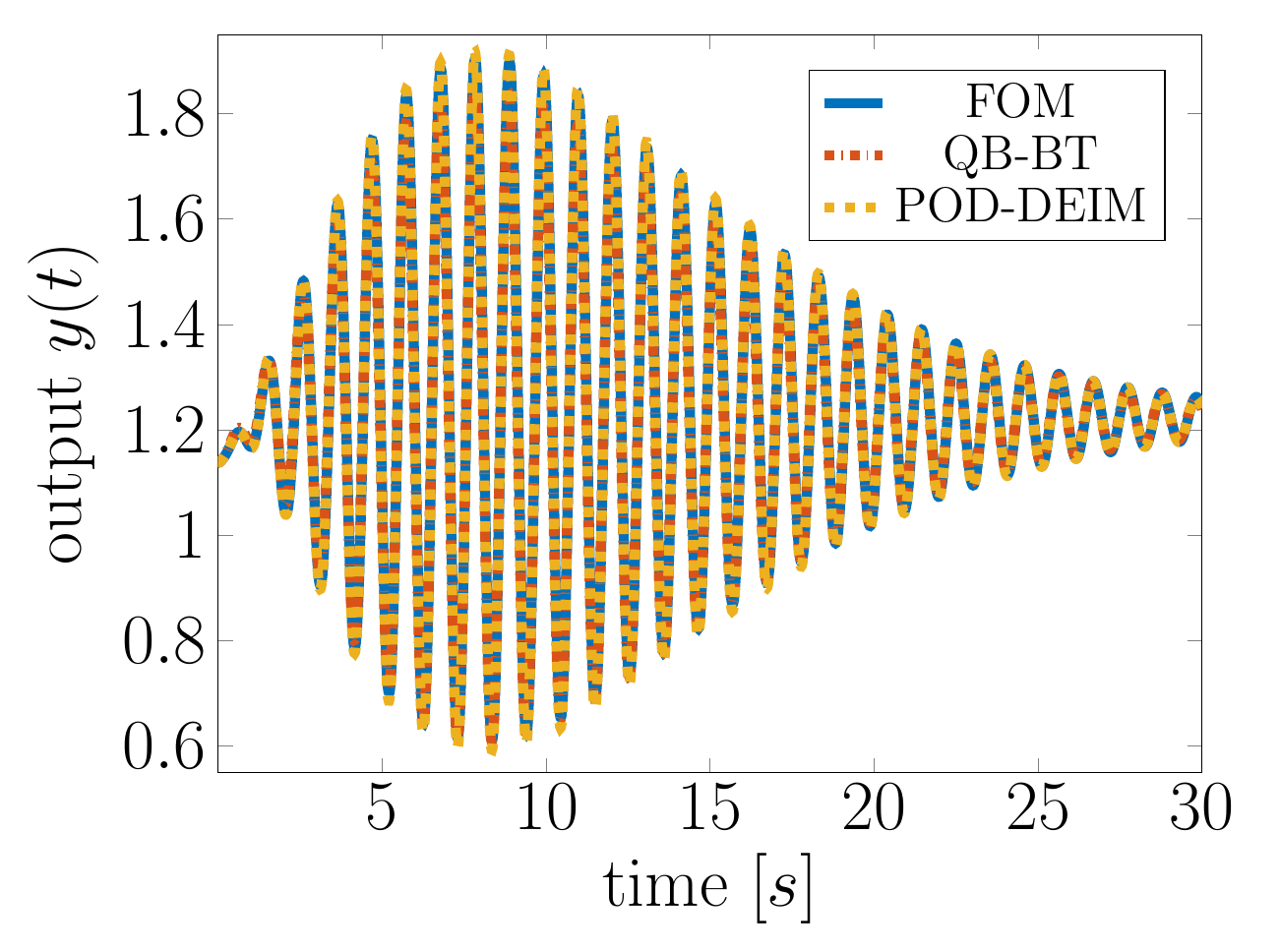}
	\includegraphics[width=0.495\textwidth]{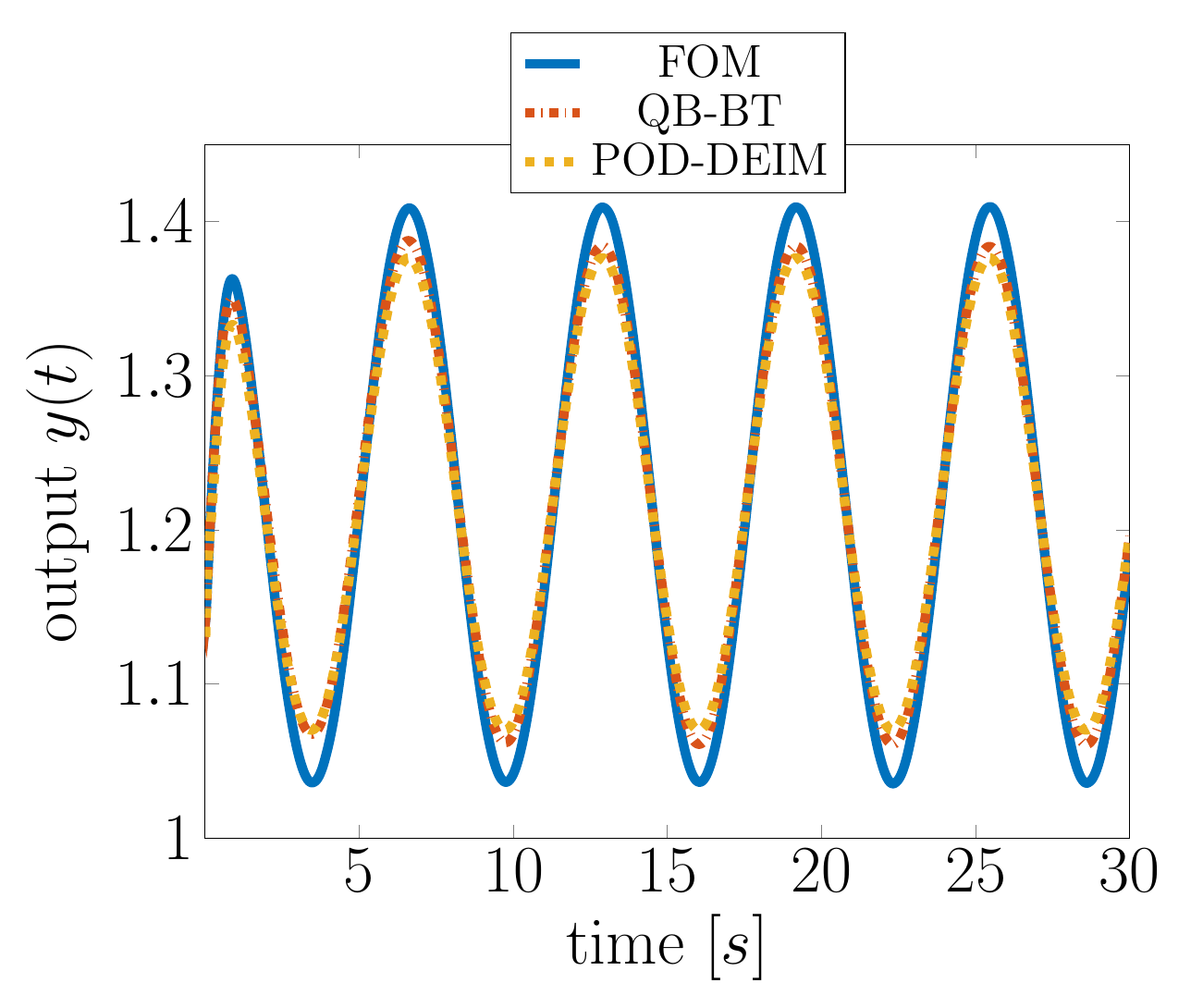}
	\caption{Model outputs for Case 3 (left) and Case 4 (right): Comparison of FOM with two ROMs for $r=20$ basis functions, respectively.}
	\label{fig:test34}
\end{figure}

\section{Conclusions}
We presented a balanced truncation model reduction approach that is applicable to a large class of nonlinear systems with time-varying and uncertain inputs. As a first step, our approach lifts the nonlinear system to quadratic-bilinear form via the introduction of auxiliary variables. As lifted systems often have system matrices with zero eigenvalues, we first introduced an artificial stabilization parameter and then derived a balancing algorithm for those lifted quadratic-bilinear systems that only requires expensive matrix computations in the original---and not in the lifted---dimension. The method was illustrated by the model reduction problem for a tubular reactor model, for which we derived the multi-stage lifting transformations and performed balanced model reduction. The numerical results showed that through our proposed method, we can obtain ROMs for nonlinear systems that are superior to POD-DEIM models in situations where a good choice of training data is not feasible.

\begin{acknowledgement}
This work has been supported in part by the Air Force Center of Excellence on Multi-Fidelity Modeling of Rocket Combustor Dynamics under award FA9550-17-1-0195, by the Air Force Office of Scientific Research (AFOSR) MURI on managing multiple information sources of multi-physics systems, Award Numbers FA9550-15-1-0038 and FA9550-18-1-0023, and the AEOLUS MMICC center under the Department of Energy grant DE-SC0019303.
The authors thank the reviewer for the very valuable comments which helped us improve our previous stabilization approach in Section~4.1. The authors thank Dr. Pawan Goyal for valuable discussions about the truncated Gramian framework. The authors thank Prof. Mark Embree for insightful feedback on the manuscript, and pointing out the connection to the field of values and the choice of $\alpha$ in our algorithm.
\end{acknowledgement}

\bibliographystyle{abbrv}
\bibliography{NLBT}

\end{document}